\documentclass[letterpaper, 10 pt]{ieeeconf}  
 
\IEEEoverridecommandlockouts 
\usepackage{geometry}
\newgeometry{top=0.75in, bottom=0.75in, right=0.75in, left=0.75in}

\newcommand{\bigzono}[1]{\Big\langle #1 \Big\rangle}

\newcommand{\zono}[1]{\langle #1 \rangle}
\usepackage{packagearticleieee}
\usepackage{amssymb}

\newtheorem{theorem}{Theorem}
\newtheorem{proposition}{Proposition}

\usepackage{cite}
\usepackage{booktabs}

 \usepackage{algorithm} 
 \usepackage{algpseudocode}
 \usepackage[utf8]{inputenc}
\usepackage{graphicx}
\usepackage{subcaption}
\newcommand{\Adm}{\mathsf{Adm}}
\usepackage{enumitem}

\makeatletter
\let\NAT@parse\undefined
\makeatother
\usepackage{url}
\usepackage[colorlinks=true,linkcolor=blue!80!black,citecolor=blue!80!black,urlcolor=blue!80!black]{hyperref}
\newcommand{\operator}[1]{{\normalfont \texttt{#1}}}

\newcommand{\CE}{R}

\DeclareMathSymbol{\shortminus}{\mathbin}{AMSa}{"39}

\newcommand{\id}{\mathsf{id}}

\def\tr{\text{tr}}

\title{\LARGE\bf Data-Driven Tube-Based Zonotopic Predictive Control\\ With Nonconvex Layered Terminal Sets  }

\author{Zhen Zhang$^1$, Bogdan Gheorghe$^2$, Florin Stoican$^2$, and Amr Alanwar$^1$
\thanks{$^1$ School of Computation, Information and Technology, Technical University of Munich, Germany. (Email: $\{$zhenzhang.zhang, alanwar$\}$@tum.de)}
\thanks{$^{2}$Bogdan Gheorghe and Florin Stoican are with Dept. of Automatic Control and Systems Engineering, RePlan team, CAMPUS Research Institute, Politehnica Bucharest, Romania (Email: \{bogdan.gheorghe1807, florin.stoican\}@upb.ro)}}

\begin{document}
\maketitle
\thispagestyle{empty}
\pagestyle{empty}

\begin{abstract}
This paper presents a data-driven tube-based zonotopic predictive control (DTZPC) framework with nonconvex layered terminal sets. Existing DTZPC schemes with closed-loop guarantees typically rely on a single ellipsoidal terminal set, which can be conservative and thereby limit feasibility. We propose a layered terminal-set design that decouples stability certification, feasibility enlargement, and motion-region screening into three components with distinct roles. First, an offline-designed feedback gain together with a contractive constrained zonotope provides a terminal ingredient for stability certification, while avoiding probabilistic feedback synthesis in high-dimensional DTZPC. Second, we derive a data-driven characterization of the inverse admissible closed-loop model set, avoiding the conservatism of interval-matrix relaxation and inversion. Combined with exact set multiplication, this yields inner and outer approximations of the maximal robust positively invariant (MRPI) set under fixed closed-loop dynamics. The inner approximation serves as a nonconvex terminal set to enlarge feasibility, whereas the outer approximation provides certified motion-region descriptions for fast screening and monitoring. Numerical examples demonstrate tighter inverse-set enclosures and improved feasibility over existing convex-terminal DTZPC schemes.
\end{abstract}

\section{Introduction}

Model predictive control (MPC) is a standard framework for constrained control, in which a finite-horizon optimal control problem is solved repeatedly in a receding-horizon manner \cite{mayne2014model}. In the presence of bounded disturbances and model mismatch, tube-based robust MPC ensures constraint satisfaction by optimizing a nominal trajectory while bounding the deviation within an invariant error tube \cite{langson2004robust}. Tube-based zonotopic predictive control (TZPC)~\cite{zheng2024safe} represents error tubes and uncertainty sets by zonotopes~\cite{kuhn1998rigorously} or constrained zonotopes~\cite{scott2016constrained}, enabling efficient set propagation and robust constraint handling.

The effectiveness of MPC schemes hinges on an accurate prediction model; in many applications such a model is unavailable or costly to obtain, and model mismatch can degrade performance or destabilize the closed loop \cite{kuntz2025beyond}. This has motivated data-driven predictive control, where controllers are synthesized directly from measured trajectories. From a set-membership viewpoint, \cite{alanwar2023data} identifies the set of all models consistent with data as a matrix zonotope and integrates it into zonotopic tube operations \cite{alanwar2022robust}; this line of work was further developed into a full data-driven tube-based zonotopic predictive control (DTZPC) framework \cite{russo2023tube}, with subsequent extensions establishing recursive feasibility and robust exponential stability guarantees\cite{farjadnia2024robust}.

Robustly positively invariant (RPI) terminal sets play a central role in robust MPC, as they support recursive feasibility, constraint satisfaction, and stability analysis~\cite{mayne2005robust}. In this context, the maximal robust positively invariant (MRPI) set is a natural terminal-set candidate~\cite{johansson2024stable}, since larger invariant terminal regions tend to enlarge the recursively feasible set and reduce conservatism.

Existing DTZPC schemes with closed-loop guarantees, however, typically construct terminal ingredients from quadratic Lyapunov arguments, thereby yielding an ellipsoidal terminal set~\cite{farjadnia2024robust}. While attractive from an analytical and computational viewpoint, such a representation may be conservative with respect to the true invariant region, especially when the admissible state set is nonconvex due to obstacles, restricted zones, or interaction constraints.

In such settings, it is desirable not only to guarantee the closed-loop behavior of the controlled system, but also to provide certified set-valued descriptions of the regions it may occupy. Such motion-region descriptions are relevant, for instance, to safe planning and interaction, where reachable or occupancy-type sets are used to certify collision avoidance and safe operation~\cite{kwon2025conformalized,sheng2024safe}.

Motivated by these observations, we propose a DTZPC framework with layered terminal sets. Building on the closed-loop-guaranteed DTZPC framework in~\cite{farjadnia2024robust}, the proposed method replaces the conventional single-set terminal design by a layered construction that decouples local stability certification, terminal-feasibility enlargement, and motion-region screening.

The contributions of this paper are as follows. First, we construct a stabilizing constrained zonotope terminal layer associated with a prescribed feedback gain. Unlike the single ellipsoidal terminal set used in existing closed-loop-guaranteed DTZPC schemes~\cite{farjadnia2024robust}, this layer is used solely for local stability certification, is better suited to complex geometry, and avoids additional probabilistic conservatism in high-dimensional gain synthesis. Second, we derive directly from data a set-valued characterization of the inverse admissible closed-loop model set. This provides a tractable description of inverse closed-loop uncertainty, which is the key missing ingredient for data-driven backward reachability and MRPI computation beyond interval-based inversion~\cite{alanwar2022data}. Third, combining this inverse-set characterization with the exact multiplication framework in~\cite{zhang2025data}, we construct inner and outer approximations of a nonconvex MRPI set under fixed closed-loop dynamics. The resulting inner approximation enlarges the terminal feasible region, while the outer approximation provides certified motion-region descriptions for screening and monitoring. These contributions go beyond a mere replacement of the ellipsoidal terminal set by an alternative set representation; they introduce the inverse-set characterization and MRPI-approximation machinery required to enable a layered terminal design under data-driven closed-loop uncertainty.

The remainder of the paper is organized as follows. Section~\ref{sec:preliminaries} introduces the problem formulation and set representations. Section~\ref{sec:DDSV} develops the layered terminal-set construction, and Section~\ref{sec:online_zpc} presents the resulting DTZPC scheme. Numerical results are given in Section~\ref{sec:numerical-simulations}, and conclusions are drawn in Section~\ref{sec:conclusion}.

\section{Preliminaries and Problem Formulation}\label{sec:preliminaries}

\subsection{Notations}
Let $\mathbb R$, $\mathbb N$, and $\mathbb N_0=\mathbb N\cup\{0\}$ denote the real numbers, natural numbers, and non-negative integers. We write $0_{m\times n}$, $1_{m\times n}$, and $I_n$ for the zero matrix, ones matrix, and identity matrix, with subscripts omitted when clear. For a matrix $A$, $A^\top$ and $A^\dagger$ denote transpose and pseudoinverse, $A_{(i,j)}$ the $(i,j)$-th entry, and $A_{(\cdot,j)}$ the $j$-th column. For a vector $v$, $v_{(i)}$ is its $i$-th element and $v_{(p_1:p_2)}=(v_{(p_1)},\ldots,v_{(p_2)})$. An empty matrix or vector is written $[\,]$.

\subsection{Set Representations and Operations}

\begin{definition}(Constrained polynomial zonotope (CPZ)~\cite{kochdumper2023constrained})\label{def:cpz}
Given $c\in\mathbb R^n$, $G\in\mathbb R^{n\times h}$, $E\in\mathbb N_0^{p\times h}$, $A\in\mathbb R^{n_c\times q}$, $b\in\mathbb R^{n_c}$, and $\CE\in\mathbb N_0^{p\times q}$, the constrained polynomial zonotope is 
\begin{align}
\label{eq:con-poly-zono}
\mathcal P=\Big\{\,c+&\sum_{i=1}^{h}\Big(\prod_{k=1}^p \alpha_k^{E_{k,i}}\Big)G_{(\cdot,i)}\ \Bigm|\ \nonumber \\&
\sum_{i=1}^{q}\Big(\prod_{k=1}^p \alpha_k^{\CE_{k,i}}\Big)A_{(\cdot,i)}=b,\ \alpha\in[-1,1]^p\,\Big\}.
\end{align}
We optionally attach an identifier $\id\in\mathbb N^{1\times p}$ for the factors in $\alpha$ and write $\mathcal P=\zono{c,G,E,A,b,\CE,\id}_{\mathrm{CPZ}}$.
Constrained zonotopes (CZs) \cite{scott2016constrained} are the special case with $E=I_p$ and $\CE=I_p$, and zonotopes arise when, additionally, $A=[\,]$ and $b=[\,]$ \cite{kochdumper2023constrained}.
\end{definition}

\begin{definition}(Constrained polynomial matrix zonotope (CPMZ)~\cite{zhang2025data})\label{def:conmatpolyzonotopes}
Given $C\in\mathbb R^{m\times n}$, generators $\{G_{(i)}\}_{i=1}^{\gamma}\subset\mathbb R^{m\times n}$, exponent matrices $E\in\mathbb N_0^{p\times\gamma}$ and $\CE\in\mathbb N_0^{p\times\gamma}$, constraints $\{A_{(i)}\}_{i=1}^{\gamma}\subset\mathbb R^{n_c\times n_a}$, and $B\in\mathbb R^{n_c\times n_a}$, the constrained polynomial matrix zonotope is 
\begin{align}
\label{eq:con-poly-mat-zono}
\mathcal Y=\Big\{\,C+&\sum_{i=1}^{\gamma}\Big(\prod_{k=1}^p \alpha_k^{E_{k,i}}\Big)G_{(i)}\ \Bigm|\ \nonumber \\&
\sum_{i=1}^{\gamma}\Big(\prod_{k=1}^p \alpha_k^{\CE_{k,i}}\Big)A_{(i)}=B,\ \alpha\in[-1,1]^p\,\Big\}.
\end{align}
We optionally attach an identifier $\id\in\mathbb N^{1\times p}$ and write $\mathcal Y=\zono{C,G,E,A,B,\CE,\id}_{\mathrm{CPMZ}}$.
Constrained matrix zonotopes (CMZs) \cite{alanwar2023data} are the special case with $E=I_p$ and $\CE=I_p$, and matrix zonotopes (MZs) arise when, additionally, $A=[\,]$ and $B=[\,]$.
\end{definition}



\begin{lemma}[Exact multiplication~\cite{zhang2025data}] \label{prop:multi}  
Given a CPMZ ${\mathcal{Y}} = \zono{C_{\mathcal{Y}}, G_{\mathcal{Y}}, E_{\mathcal{Y}}, A_{\mathcal{Y}}, B_{\mathcal{Y}}, R_{\mathcal{Y}}, \id_{\mathcal{Y}}}_\text{CPMZ}\subset \R^{n_x \times n }$ and a CPZ $\mathcal{P} = \langle c_{\mathcal{P}}, G_{\mathcal{P}}, {E}_{\mathcal{P}}, A_{\mathcal{P}}, b_{\mathcal{P}}, {R}_{\mathcal{P}}, \id_{\mathcal{P}} \rangle_\text{CPZ}\subset \R^{n}$, the following identity holds
\begin{align}
\mathcal{Y} \otimes \mathcal{P} =  \bigzono{C_{\mathcal{Y}} c_{\mathcal{P}},G_{\mathcal{YP}} , E_{\mathcal{YP}},
    A_{\mathcal{Y}\mathcal{P}}, {B}_{\mathcal{Y}\mathcal{P}} , R_{\mathcal{YP}} ,\id_{\mathcal{YP}}}_\text{CPZ},  \label{eq:matczono}
\end{align}
where $\mathcal{Y}\otimes \mathcal{P} \subset \R^{n_x}$ and
\begin{align*}
    A_{\mathcal{Y}\mathcal{P}} =& \begin{bmatrix} \text{vec}(A^{(1)}_{\mathcal{Y}}) & \dots & \text{vec}
    (A_{\mathcal{Y}}^{(\gamma)}) & 0_{n_{c}n_{a}  \times q_{\mathcal{P}}} \\
     0_{m_{\mathcal{P}}  \times 1} & \dots & 0_{m_{\mathcal{P}}  \times 1} & A_{\mathcal{P}}
    \end{bmatrix}, \\
    {B}_{\mathcal{Y}\mathcal{P}} =&\begin{bmatrix} \text{vec}(B_{\mathcal{Y}}) \\  b_{\mathcal{P}}\end{bmatrix},
   G_{\mathcal{YP}}=\begin{bmatrix} G_{\mathcal{Y}}c_{\mathcal{P}} & C_{\mathcal{Y}} G_{\mathcal{P}}& G_{f} \end{bmatrix},
     \\
    G_{f} =& \begin{bmatrix} g_{f}^{(1)} & \dots & g_{f}^{(h_\mathcal{P}\gamma)} \end{bmatrix},  {R_\mathcal{YP}} = \begin{bmatrix} \overline{R}_{\mathcal{Y}} & \overline{R}_{\mathcal{P}} \end{bmatrix} ,\\
    E_\mathcal{YP}=  &\bigg[ \overline{E}_{\mathcal{Y}}, \overline{E}_{\mathcal{P}}, 
    \Big[\overline{E}_{\mathcal{Y}}^{(\cdot,1)} + \overline{E}_{\mathcal{P}}^{(\cdot,1)}\Big],
    \dots,
    \Big[\overline{E}_{\mathcal{Y}}^{(\cdot,1)} + \overline{E}_{\mathcal{P}}^{(\cdot,h_\mathcal{P})}\Big], \\
    & \dots, \Big[\overline{E}_{\mathcal{Y}}^{(\cdot,\gamma)} + \overline{E}_{\mathcal{P}}^{(\cdot,1)}\Big],
    \dots, \Big[\overline{E}_{\mathcal{Y}}^{(\cdot,\gamma)}+\overline{E}_{\mathcal{P}}^{(\cdot,h_\mathcal{P})}\Big] \bigg]  
\end{align*}
with $\overline{E}_\mathcal{Y},\overline{E}_\mathcal{P},\overline{R}_\mathcal{Y},\overline{R}_\mathcal{P},\id_{\mathcal{YP}}$ obtained from $\operator{mergeID}$\cite{kochdumper2020sparse} and $
    g_{f}^{(k)} =  G^{(i)}_{\mathcal{Y}} G_{\mathcal{P}}^{(\cdot,j)}$, $ k=h_{\mathcal{P}} (i-1) + j$, for $i=1,\dots,\gamma$,\;$j=1,\dots,h_\mathcal{P}$.
\end{lemma}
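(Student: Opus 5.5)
The plan is a double-inclusion argument. The claim is that the set $\mathcal{Y}\otimes\mathcal{P}=\{Yp \mid Y\in\mathcal{Y},\,p\in\mathcal{P}\}$ equals the CPZ on the right of \eqref{eq:matczono}. First I would apply $\operator{mergeID}$ to $\id_{\mathcal{Y}}$ and $\id_{\mathcal{P}}$. This yields a common factor vector $\alpha\in[-1,1]^{p}$ together with padded exponent matrices $\overline{E}_{\mathcal{Y}},\overline{R}_{\mathcal{Y}},\overline{E}_{\mathcal{P}},\overline{R}_{\mathcal{P}}$, and it leaves both sets unchanged. If the identifiers are disjoint, the factors of $\mathcal{Y}$ and $\mathcal{P}$ occupy separate coordinates of $\alpha$, so choosing $Y$ and $p$ independently corresponds to choosing $\alpha$ freely in the product box. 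If they share factors, the dependence is kept, which is exactly the intended semantics of the identifier. In both cases, each element of $\mathcal{Y}$ has the form $Y(\alpha)=C_{\mathcal{Y}}+\sum_{i}\alpha^{\overline{E}_{\mathcal{Y}}^{(\cdot,i)}}G^{(i)}_{\mathcal{Y}}$, subject to $\sum_i\alpha^{\overline{R}_{\mathcal{Y}}^{(\cdot,i)}}A^{(i)}_{\mathcal{Y}}=B_{\mathcal{Y}}$. Similarly, $p(\alpha)=c_{\mathcal{P}}+\sum_j\alpha^{\overline{E}_{\mathcal{P}}^{(\cdot,j)}}G_{\mathcal{P}}^{(\cdot,j)}$, subject to its own constraints. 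Here $\alpha^{e}$ denotes the monomial $\prod_k\alpha_k^{e_k}$.

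The core step is to expand the product $Y(\alpha)p(\alpha)$ bilinearly. The expansion gives four groups of terms:
\begin{align*}
Y(\alpha)p(\alpha) = {} & C_{\mathcal{Y}}c_{\mathcal{P}}
+\sum_{i}\alpha^{\overline{E}_{\mathcal{Y}}^{(\cdot,i)}}G^{(i)}_{\mathcal{Y}}c_{\mathcal{P}}
+\sum_{j}\alpha^{\overline{E}_{\mathcal{P}}^{(\cdot,j)}}C_{\mathcal{Y}}G_{\mathcal{P}}^{(\cdot,j)}\\
&+\sum_{i,j}\alpha^{\overline{E}_{\mathcal{Y}}^{(\cdot,i)}+\overline{E}_{\mathcal{P}}^{(\cdot,j)}}G^{(i)}_{\mathcal{Y}}G_{\mathcal{P}}^{(\cdot,j)}.
\end{align*}
This uses the fact that monomials multiply by adding exponent vectors. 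Matching these terms to the statement is then bookkeeping. The center is $C_{\mathcal{Y}}c_{\mathcal{P}}$. The generator blocks are $G_{\mathcal{Y}}c_{\mathcal{P}}$, $C_{\mathcal{Y}}G_{\mathcal{P}}$ and $G_f$. The column ordering $k=h_{\mathcal{P}}(i-1)+j$ agrees with the ordering of the exponent blocks in $E_{\mathcal{YP}}$.

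For the constraints, I would vectorize the matrix equality of $\mathcal{Y}$ as $\sum_i\alpha^{\overline{R}_{\mathcal{Y}}^{(\cdot,i)}}\mathrm{vec}(A^{(i)}_{\mathcal{Y}})=\mathrm{vec}(B_{\mathcal{Y}})$. I would then stack it with the vector constraint of $\mathcal{P}$. The stacked system uses the column set $R_{\mathcal{YP}}=[\overline{R}_{\mathcal{Y}},\overline{R}_{\mathcal{P}}]$ and is block diagonal, which gives exactly $A_{\mathcal{YP}}$ and $B_{\mathcal{YP}}$. Since $\mathrm{vec}$ is a bijection, $\alpha$ satisfies both original constraint systems if and only if it satisfies the stacked one.

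With these identities in hand, the two inclusions are immediate. For "$\subseteq$", take any $Y\in\mathcal{Y}$ and $p\in\mathcal{P}$, realized by a single feasible $\alpha$ after merging. Then $Yp$ is the CPZ point at that same $\alpha$. For "$\supseteq$", take any feasible $\alpha$ of the CPZ. It satisfies both constraint blocks, so $Y(\alpha)\in\mathcal{Y}$ and $p(\alpha)\in\mathcal{P}$, and the expansion shows the CPZ point equals $Y(\alpha)p(\alpha)$.

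I expect the main obstacle to be the $\operator{mergeID}$ step. One must argue that the padding does not change either set, and that the pairs $(Y,p)$ are parametrized exactly by the merged factor box intersected with the stacked constraints. I would first prove this for disjoint identifiers, by splitting $\alpha=(\alpha_{\mathcal{Y}},\alpha_{\mathcal{P}})$ with $[-1,1]^{p}=[-1,1]^{p_{\mathcal{Y}}}\times[-1,1]^{p_{\mathcal{P}}}$. I would then note that shared identifiers simply restrict the realizable pairs to those with a common factor value, which is consistent with the dependency-preserving definition of $\otimes$.
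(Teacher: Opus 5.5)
Your proof is correct and matches the standard argument for this result. The paper does not prove the lemma itself; it imports it from \cite{zhang2025data}, where the proof is likewise a direct one. Align the factors via \texttt{mergeID}, multiply out $Y(\alpha)p(\alpha)$ term by term, and stack the vectorized constraint of $\mathcal{Y}$ block-diagonally with that of $\mathcal{P}$. You also correctly note that exactness when identifiers are shared relies on the dependency-preserving meaning of $\otimes$, not on the plain set product.
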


\begin{lemma}
	(Intersection~\cite{kochdumper2023constrained}) Given two CPZs $\mathcal{P}_1 = \zono{c_1, G_1, E_1, A_1, b_1, R_1}_\text{CPZ}\subset \R^{n}$ and $
\mathcal{P}_2 = \zono{c_2, G_2, E_2, A_2, b_2, R_2}_\text{CPZ}\subset \R^{n}
$, their intersection is
	\begin{align}
			\mathcal{P}_1 \cap \mathcal{P}_2 =& \bigg \langle c_1, G_1, \begin{bmatrix} E_1 \\ \mathbf{0} \end{bmatrix}, \begin{bmatrix} A_1 & \mathbf{0} & \mathbf{0} & \mathbf{0} \\ \mathbf{0} & A_2 & \mathbf{0} & \mathbf{0}  \\ \mathbf{0} & \mathbf{0} & G_1 & - G_2 \end{bmatrix}, \nonumber \\&\begin{bmatrix} b_1 \\ b_2 \\ c_2 - c_1 \end{bmatrix}, \begin{bmatrix} \CE_1 & \mathbf{0} & E_1 & \mathbf{0} \\ \mathbf{0} & \CE_2 & \mathbf{0} & E_2 \end{bmatrix}  \bigg \rangle_{\text{CPZ}},
	\end{align}	
	\label{prop:intersection}
\end{lemma}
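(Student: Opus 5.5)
The intersection formula follows from a double inclusion, with the two sets written in their common factor space. I will first apply $\operator{mergeID}$ conceptually. The statement omits identifiers, so I take the factors of $\mathcal P_1$ and $\mathcal P_2$ to be independent. I stack them as $\alpha=(\alpha^{(1)},\alpha^{(2)})\in[-1,1]^{p_1+p_2}$. Under this stacking, the padded exponent matrix $\begin{bmatrix} E_1\\ \mathbf 0\end{bmatrix}$ makes the generator part of the proposed CPZ depend only on $\alpha^{(1)}$. The constraint exponent matrix is block structured. Its first block column $[\CE_1;\mathbf 0]$ acts on $\alpha^{(1)}$, the block column $[\mathbf 0;\CE_2]$ acts on $\alpha^{(2)}$, the block column $[E_1;\mathbf 0]$ reproduces the monomials of $G_1$ in $\alpha^{(1)}$, and the block column $[\mathbf 0;E_2]$ reproduces the monomials of $G_2$ in $\alpha^{(2)}$.

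\textbf{Unfolding the set.} Writing $m_j(\alpha^{(j)})$ for the vector of generator monomials of $\mathcal P_j$ and $r_j(\alpha^{(j)})$ for its constraint monomials, the right-hand side of Lemma~\ref{prop:intersection} is
\[
\Big\{ c_1+G_1 m_1(\alpha^{(1)}) \;\Big|\; A_1 r_1(\alpha^{(1)})=b_1,\ A_2 r_2(\alpha^{(2)})=b_2,\ G_1 m_1(\alpha^{(1)})-G_2 m_2(\alpha^{(2)})=c_2-c_1,\ \alpha\in[-1,1]^{p_1+p_2}\Big\}.
\]
This is obtained by multiplying the block constraint matrix against the stacked monomial vector $(r_1,r_2,m_1,m_2)$ row block by row block.

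\textbf{Inclusion $\subseteq$.} Take a point $x$ of this set with witness $\alpha$. The first constraint together with $\alpha^{(1)}\in[-1,1]^{p_1}$ shows $x=c_1+G_1m_1(\alpha^{(1)})\in\mathcal P_1$. The third constraint rewrites $x$ as $c_2+G_2m_2(\alpha^{(2)})$. Combined with the second constraint, this gives $x\in\mathcal P_2$.

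\textbf{Inclusion $\supseteq$.} Take $x\in\mathcal P_1\cap\mathcal P_2$. By Definition~\ref{def:cpz} there is a feasible $\alpha^{(1)}$ representing $x$ in $\mathcal P_1$ and a feasible $\alpha^{(2)}$ representing it in $\mathcal P_2$. Stacking them satisfies all three constraint blocks, and the resulting point is $x$.

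The only delicate step is the bookkeeping of factor identifiers. If $\mathcal P_1$ and $\mathcal P_2$ shared factors, the independent stacking would be incorrect. The two witnesses $\alpha^{(1)}$ and $\alpha^{(2)}$ must therefore be allowed to differ, so I will state explicitly that the factors are kept disjoint, which is the convention behind the zero padding. Verifying that the exponent columns align with the block columns of the constraint matrix is routine.
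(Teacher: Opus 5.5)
Your double-inclusion argument is correct and is essentially the standard one. The paper gives no proof of its own and defers to \cite{kochdumper2023constrained}, which obtains the formula exactly as you do: it restricts the factors of $\mathcal P_1$ through the added equality $c_1+G_1m_1(\alpha^{(1)})=c_2+G_2m_2(\alpha^{(2)})$, with the factors of $\mathcal P_2$ treated as independent new factors.

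One small correction to your closing remark concerns shared factors. Set intersection never depends on how factors are labeled, so the independent stacking is valid even if the two CPZs carried common identifiers. What would be wrong is merging those identifiers, forcing $\alpha^{(1)}$ and $\alpha^{(2)}$ to coincide on shared components, since that can yield a strict subset of $\mathcal P_1\cap\mathcal P_2$. This is why the formula must be read with disjoint factor rows, as you do.
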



\subsection{Problem Statement}

We consider a linear time-invariant system in discrete-time $k\in\N_0$ with unknown system matrices $A_\tr$ and $B_\tr$:
\vspace{-0.5em}
\begin{equation} \label{eq:model-linear}
    x_{(k)} = A_\tr x_{(k-1)} + B_\tr u_{(k-1)} + w_{(k)},
\end{equation}
where $x_{(k)} \in \R^{n_x}$ is the state at time $k$, $u_{(k)} \in \R^{n_u}$ is the control input, and $w_{(k)} \in \R^{n_x}$ is the unknown noise.


For simplicity, we consider the offline data from a single trajectory of length $T$ and organize the input and
noisy state data into matrices:
\begin{align}
    \label{eq:offline-data}
    X^+ &= \begin{bmatrix}  x_{(1)} & x_{(2)} & \dots & x_{(T)}\end{bmatrix},\nonumber\\
    X^- &= \begin{bmatrix}  x_{(0)}& x_{(1)} & \dots & x_{(T-1)}\end{bmatrix}, \\
    U^- &= \begin{bmatrix} u_{(0)} &  u_{(1)}  & \dots & u_{(T-1)} \end{bmatrix}. \nonumber
\end{align} 
Let $D^-\!=\! 
\begingroup
\setlength\arraycolsep{2pt}
\begin{bmatrix} X^{-\top} & U^{-\top} \end{bmatrix}^\top
\endgroup$ and $D\!=\! 
\begingroup
\setlength\arraycolsep{2pt}
\begin{bmatrix} X^{+\top} & X^{-\top} &U^{-\top}\end{bmatrix}^\top
\endgroup$. 

\begin{assumption}\label{ass:zon-noise}
    For all $k\in \Z_{\geq 0}$, the noise $w_{(k)}$ is bounded by a known zonotope $\mathcal{Z}_w$, which includes the origin.
    \hfill $\lrcorner$
\end{assumption}
\begin{assumption}\label{ass:rank_D}
    The data matrix $D^-$ has full row rank, i.e., $\mathrm{rank}(D^-) = n_x + n_u$.
    \hfill $\lrcorner$
\end{assumption}

We represent the sequence of unknown noise as $\{w_{(k)}\}_{k=0}^{T}$. From Assumption~\ref{ass:zon-noise}, it follows that 
\[
W^- = \begin{bmatrix} w_{(0)} & \cdots & w_{(T-1)} \end{bmatrix} \in \mzon_w = \langle C_{\mzon_w}, G_{\mzon_w} \rangle,
\]
where $C_{\mzon_w} \in \R^{n_x \times n_T}$ and $G_{\mzon_w} \in \R^{n_x \times \gamma_{\mathcal{Z}_w} n_T}$ with $\mzon_w$ denoting the MZ resulting from the concatenation of multiple noise zonotopes $\mathcal{Z}_w$~\cite{alanwar2023data}.

\begin{definition}\label{MRPI}
Fix $u_{(k)}=Kx_{(k)}$ for \eqref{eq:model-linear} and let $\Theta_\tr=A_{\tr}+B_{\tr}K$.
A set $\Omega\subseteq\R^{n_x}$ is robust positively invariant (RPI)~\cite{blanchini2008set} if $\Theta\Omega\oplus\mathcal{Z}_w\subseteq\Omega$ .
Given $\mathcal{S}\subseteq\R^{n_x}$, the maximal robust positively invariant (MRPI)~\cite{blanchini2008set} set in $\mathcal{S}$ induced by $K$ is
$\Omega_{\max}(K)=\bigcup\{\Omega\subseteq\mathcal{S}\mid \Theta\Omega\oplus\mathcal{Z}_w\subseteq\Omega\}$ .
\end{definition}

\begin{definition}[Controlled $\lambda$-contractive set~\cite{blanchini2008set}]\label{def:lambda_contract}
Let $\lambda\in[0,1)$. Consider \eqref{eq:model-linear} with $w_{(k)}\in\mathcal{Z}_w$.
A set $\mathcal{P}\subseteq\R^{n_x}$ is called controlled $\lambda$-contractive if for every $x\in\mathcal{P}$ there exists an input $u\in\mathcal{U}$ such that
$A_\tr x + B_\tr u + w \in \lambda\mathcal{P}$ for all $w_{(k)}\in\mathcal{Z}_w$.
\end{definition}

We consider DTZPC for \eqref{eq:model-linear} using only noisy state--input data $D$ and prior knowledge of $\mathcal Z_w$, $\mathcal X$, and $\mathcal U$, while $\Phi_\tr=[A_\tr,B_\tr]$ is unknown. The resulting design task is formalized in Problem~\ref{prob:layered_zpc}.

\begin{problem}\label{prob:layered_zpc}
Consider \eqref{eq:model-linear} with $w_{(k)}\in\mathcal Z_w$ and constraints $x_{(k)}\in\mathcal X$ and $u_{(k)}\in\mathcal U$.
Given a dataset $\mathcal D$, synthesize a feedback gain $K$, construct terminal sets
$\mathcal X_{\mathrm{st}} \subseteq \mathcal X_{\mathrm{sc}} \subseteq \mathcal X_{\mathrm{out}} \subseteq \mathcal X$,
and design a tube-based receding-horizon ZPC scheme such that
(i) $\mathcal X_{\mathrm{st}}$ is a convex contractive set under $u=Kx$ and yields robust exponential stability;
(ii) $\mathcal X_{\mathrm{sc}}$ is a nonconvex inner approximation of the maximal robust positively invariant set induced by $K$ and serves as the terminal constraint for recursive feasibility; and
(iii) $\mathcal X_{\mathrm{out}}$ is a conservative outer approximation of the same invariant set and serves as an outer monitoring envelope for efficient online safety screening.
\end{problem}

\section{Data-Driven Layered Terminal Set Construction}\label{sec:DDSV}

This section develops the layered terminal-set construction in Problem~\ref{prob:layered_zpc}.










\subsection{Local Contractive Set and Feedback Synthesis}
\label{sec:local_design_set}

We consider a desired equilibrium $(x_\star,u_\star)$ and, via a standard coordinate shift, set $(x_\star,u_\star)=(0,0)$ without loss of generality. The local design set is chosen as a compact convex neighborhood of the origin, $\mathcal{X}_{\mathrm{st}} \subset \mathbb{R}^{n_x}$ with $0 \in \mathcal{X}_{\mathrm{st}}$, represented as a CZ $\mathcal{X}_{\mathrm{st}}=\zono{c_{\mathrm{st}},G_{\mathrm{st}},A_{\mathrm{st}},b_{\mathrm{st}}}\subset \R^{n}$.
Consider the collected data $D$ from the system \eqref{eq:model-linear} under Assumptions \ref{ass:zon-noise} and \ref{ass:rank_D}. We restrict attention to $\mathcal{X}_{\mathrm{st}}$ and seek a static state-feedback law of the form $u_{(k)}=K x_{(k)}$ such that $\mathcal{X}_{\mathrm{st}}$ is $\lambda$-contractive for the induced closed-loop system, given the data $D$ from \eqref{eq:model-linear} and the known disturbance bound $\mathcal Z_w$. The resulting pair $(K,\mathcal{X}_{\mathrm{st}})$ provides a local stability certificate and serves as the basis for the layered terminal-set construction in the sequel.

Let all system matrices $\begin{bmatrix} A & B \end{bmatrix}$ of \eqref{eq:model-linear} consistent with the data $D$ be
\begin{equation}
    \mathcal{N}^{\Sigma} = \Big\{ 
    \begingroup
    \setlength\arraycolsep{3pt}
    \begin{bmatrix} A & B \end{bmatrix}
    \endgroup
    \Bigm| X^+ = A X^- + B U^- + W^-, W^- \in \mathcal{M}_w \Big\}. \label{eq:Nsig}
\end{equation}
By definition, $\begin{bmatrix} A_\tr & B_\tr \end{bmatrix} \in \mathcal{N}_{\Sigma}$.
For a gain $K$, define the set of closed-loop systems consistent with data as
\begin{align} \label{cl-set}
   \mathcal{N}^{\Sigma}_{\mathrm{c}l}= \Big \{\Theta\Bigm| \Theta=A+B K, [A \quad B] \in \mathcal{N}^{\Sigma} \Big \}.
\end{align}

The control gain $K$ is parameterized by a decision variable $V^K \in \mathbb{R}^{T \times n}$ that must satisfy~\cite{de2019formulas}
\begin{align} \label{G}
 \begin{bmatrix}
    I_n \\ K
\end{bmatrix}= \begin{bmatrix}
     X^0  \\   U^0 
\end{bmatrix}   V^K  = D^0   V^K.
\end{align}

For conciseness, we adopt the following shorthand throughout this section: $X_1 = X^+$, $C_w = C_{\mzon_w}$, $G_w = G_{\mzon_w}$ denote the center and generators of the noise MZ $\mzon_w$ induced by Assumption~\ref{ass:zon-noise}, and $D_0^{\bot}$ denotes a matrix whose columns form a basis of $\mathrm{Null}(D^-)$. Moreover, we write $(G_x, c_x, A_x, b_x) = (G_{\mathrm{st}}, c_{\mathrm{st}}, A_{\mathrm{st}}, b_{\mathrm{st}})$ for the CZ parameters of the local set $\mathcal{X}_{\mathrm{st}}$.

\begin{lemma}\label{clrep}
Consider \eqref{eq:model-linear}. Let the measured state--input data be given by $D$ and $D^-$. 
Under the parametrization \eqref{G}, the set \eqref{cl-set} admits an exact representation as a CMZ~\cite{modares2026unifying}.
 \begin{align}\label{clset}
&\mathcal{M}_{\mathrm{cl}}^\Sigma= \Big<\Big(X_1-C_w\Big)V_K,- G_w \circ V_K ,A_C,B_C \Big>_{\text{MZ}}.\\
\label{ACBC}
 &   A_{C}=G_{w}\circ D_0^{\bot} , B_{C}=\Big[ X_1- C_w  \Big]    D_0^{\bot} .
\end{align}
\end{lemma}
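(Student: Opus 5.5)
We prove the representation by a direct two-sided set-equality argument. The idea is to parametrize every consistent noise realization by its matrix-zonotope factors, then use the gain parametrization \eqref{G} to turn $[A\ B]$ into $\Theta$.

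\textbf{Step 1: rewrite data consistency.} Write an arbitrary $W^-\in\mathcal M_w$ as $W^-=C_w+\sum_{i}\beta_i G_w^{(i)}$ with $\beta\in[-1,1]^{\gamma}$, where the $G_w^{(i)}$ are the matrix generators (blocks of $G_w$). Then $[A\ B]\in\mathcal N^\Sigma$ iff
\[
[A\ B]\,D^- = X_1-C_w-\textstyle\sum_i\beta_i G_w^{(i)}
\]
for some admissible $\beta$. Read as a linear equation in $[A\ B]$, this is solvable iff its right-hand side lies in the row space of $D^-$.

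\textbf{Step 2: solvability and the constraint.} The row space of $D^-$ is the annihilator of $\mathrm{Null}(D^-)$. Hence solvability is equivalent to
\[
\Big(X_1-C_w-\textstyle\sum_i\beta_i G_w^{(i)}\Big)D_0^{\bot}=0,
\]
that is, $\sum_i\beta_i\, G_w^{(i)}D_0^{\bot}=(X_1-C_w)D_0^{\bot}$. Interpreting the notation $G_w\circ D_0^{\bot}$ as the generator-wise product $\{G_w^{(i)}D_0^{\bot}\}_i$, this is exactly the constraint $\sum_i\beta_iA_C^{(i)}=B_C$ with $A_C,B_C$ as in \eqref{ACBC}. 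By Assumption~\ref{ass:rank_D}, $D^-$ has full row rank, so $D^-$ is injective from the left. Therefore, whenever the constraint holds, the solution $[A\ B]=(X_1-C_w-\sum_i\beta_iG_w^{(i)})(D^-)^\dagger$ is unique.

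\textbf{Step 3: closed loop via \eqref{G}.} From $D^-V_K=[I;K]$ (identifying $D^0$ with $D^-$) we get $\Theta=A+BK=[A\ B]D^-V_K$. Substituting the consistency equation gives
\[
\Theta=(X_1-C_w)V_K-\textstyle\sum_i\beta_i G_w^{(i)}V_K .
\]
Inclusion $\subseteq$: any $\Theta\in\mathcal N^\Sigma_{cl}$ comes from some $[A\ B]\in\mathcal N^\Sigma$, so its $\beta$ satisfies the constraint of Step 2 and $\Theta$ has the displayed form. This matches the center $(X_1-C_w)V_K$ and generators $-G_w\circ V_K$ of \eqref{clset}. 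Inclusion $\supseteq$: given $\beta\in[-1,1]^\gamma$ satisfying the constraint, Step 2 yields an $[A\ B]\in\mathcal N^\Sigma$ with that noise realization, and Step 3 shows its closed loop equals the displayed point. Together these give exactness.

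The main obstacle is Step 2, namely the exact equivalence between existence of $[A\ B]$ and the nullspace constraint, together with the bookkeeping of the generator-wise products $\circ$. Row-space solvability only needs the annihilator argument. The full-row-rank assumption supplies uniqueness, which is needed to move between $\beta$ and $[A\ B]$ without loss. I would verify carefully that the factors $\beta$ in the constraint and in the generators are the same shared factors, so the object is a genuine CMZ rather than an independent outer bound.
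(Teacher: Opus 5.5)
Your argument is correct and takes essentially the paper's route: the paper gives no in-text proof and imports the lemma from \cite{modares2026unifying}. That result rests on your Fredholm-alternative step, namely the nullspace constraint $\sum_i\beta_i G_w^{(i)}D_0^{\bot}=(X_1-C_w)D_0^{\bot}$ on the shared noise factors (the same argument the paper writes out for Lemma~\ref{lem:theta_inv_cmz}), followed by $\Theta=[A\ B]D^-V_K$. One minor point: the uniqueness you draw from Assumption~\ref{ass:rank_D} is not needed for the set equality, since $\Theta$ depends on $[A\ B]$ only through $[A\ B]D^-$, which $\beta$ already fixes; full row rank matters instead for the solvability of \eqref{G}.
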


\begin{lemma}\label{lem:Xst_synthesis_contractivity}
Consider \eqref{eq:model-linear} under Assumptions~\ref{ass:zon-noise} and~\ref{ass:rank_D}. 
Fix a contraction factor $\lambda\in(0,1)$ and a local set 
$\mathcal X_{\mathrm{st}}=\zono{c_{\mathrm{st}},G_{\mathrm{st}},A_{\mathrm{st}},b_{\mathrm{st}}}\subseteq \mathbb R^{n_x}$. 
If there exist decision variables $V_K$, $\Gamma$, $L$, and $P$ such that \cite{modares2026unifying}
\begin{align}
\lambda G_x L&=\Big(I_n-\Big(X_1-C_w\Big)V_K\Big) c_x-c_h, \label{eq:lemXst_a}\\
G_{cl}&=\lambda G_x \Gamma, \label{eq:lemXst_b}\\
P A_{cl}&=A_x \Gamma, \label{eq:lemXst_c}\\
P b_{cl}&=\lambda b_x+A_x L, \label{eq:lemXst_d}\\
|\Gamma|\bar{\textbf{1}}+|L| &\le \bar{\textbf{1}}, \label{eq:lemXst_e}\\
X_0 V_K&=I_n, \label{eq:lemXst_f}
\end{align}
where $c_h$ is a center correction term and $(G_{cl},c_{cl},A_{cl},b_{cl})$ is the CZ representation of the closed-loop successor set, both constructed as in~\cite[Th.~2]{modares2026unifying},
then the corresponding feedback gain is given by $K = U_0 V_K$ and the resulting closed-loop uncertainty set $\mathcal M_{\mathrm{cl}}^\Sigma$ is defined in \eqref{clset}. 
Moreover, $\mathcal X_{\mathrm{st}}$ is controlled $\lambda$-contractive uniformly over $\mathcal M_{\mathrm{cl}}^\Sigma$, i.e.,
\begin{align}
\Theta\,\mathcal{X}_{\mathrm{st}} \oplus \mathcal{Z}_w \subseteq \lambda\,\mathcal{X}_{\mathrm{st}},
\qquad \forall\,\Theta\in\mathcal{M}_{\mathrm{cl}}^\Sigma.
\label{eq:Xst_lambda_uniform_inclusion}
\end{align}
\end{lemma}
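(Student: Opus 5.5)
The proof will rest on the standard constrained-zonotope containment certificate: for CZs $\mathcal Z_1=\zono{c_1,G_1,A_1,b_1}$ and $\mathcal Z_2=\zono{c_2,G_2,A_2,b_2}$, $\mathcal Z_1\subseteq\mathcal Z_2$ holds if there exist $\Gamma,L,P$ with $G_1=G_2\Gamma$, $c_2-c_1=G_2L$, $PA_1=A_2\Gamma$, $Pb_1=b_2+A_2L$, and $|\Gamma|\bar{\mathbf 1}+|L|\le\bar{\mathbf 1}$. We will apply it with $\mathcal Z_2=\lambda\mathcal X_{\mathrm{st}}=\zono{\lambda c_x,\lambda G_x,A_x,\lambda b_x}$. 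Here we rescale the constraint so that the factor set is unchanged; equivalently we use $\zono{\lambda c_x,\lambda G_x,A_x,b_x}$ and absorb the scaling into the $b$-equation as in \eqref{eq:lemXst_d}. We will take $\mathcal Z_1$ to be the CZ outer description $(G_{cl},c_{cl},A_{cl},b_{cl})$ of the successor set $\bigcup_{\Theta\in\mathcal M^\Sigma_{\mathrm{cl}}}\Theta\mathcal X_{\mathrm{st}}\oplus\mathcal Z_w$ built in \cite[Th.~2]{modares2026unifying}.

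\textbf{Step 1 (gain and model set).} From \eqref{eq:lemXst_f} together with $K=U_0V_K$, we get $D^0V_K=[I_n;K]$, which is exactly the parametrization \eqref{G}. Lemma~\ref{clrep} then gives that every $\Theta=A+BK$ with $[A\ B]\in\mathcal N^\Sigma$ lies in the CMZ $\mathcal M^\Sigma_{\mathrm{cl}}$ of \eqref{clset}. In particular $\Theta_\tr\in\mathcal M^\Sigma_{\mathrm{cl}}$. The reason is that $\Theta=[A\ B]D^0V_K=(X_1-W^-)V_K$ with $W^-\in\mathcal M_w$, and the null-space constraint $(X_1-W^-)D_0^\bot=0$ is encoded by $A_C,B_C$.

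\textbf{Step 2 (successor enclosure).} Fix any $\Theta\in\mathcal M^\Sigma_{\mathrm{cl}}$, $x\in\mathcal X_{\mathrm{st}}$ and $w\in\mathcal Z_w$. We need $\Theta x+w\in\mathcal Z_1$. Writing $\Theta=C_\Theta+\sum_i\beta_iG^{(i)}$ and $x=c_x+G_x\xi$, the product $\Theta x$ contains bilinear terms $\beta_i\xi_j$. The construction of \cite[Th.~2]{modares2026unifying} outer-bounds these bilinear terms by new independent factors in $[-1,1]$, keeps the linear constraints in $\beta$ and $\xi$ inside $A_{cl},b_{cl}$, and collects the center shift into $c_h$. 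Exact multiplication (Lemma~\ref{prop:multi}) followed by overapproximation of the polynomial terms justifies this. The center of $\mathcal Z_1$ can then be matched so that $\lambda c_x-c_{cl}$ equals the right-hand side of \eqref{eq:lemXst_a}, namely $(I_n-(X_1-C_w)V_K)c_x-c_h$ after the shift that compares $\Theta c_x$ to $c_x$. I will simply take this enclosure and center bookkeeping from the cited theorem.

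\textbf{Step 3 (containment).} Conditions \eqref{eq:lemXst_a}--\eqref{eq:lemXst_e} are exactly the containment certificate for $\mathcal Z_1\subseteq\lambda\mathcal X_{\mathrm{st}}$. To verify it directly, take $z=c_{cl}+G_{cl}\zeta$ with $A_{cl}\zeta=b_{cl}$ and $\|\zeta\|_\infty\le1$, and set $\xi=\Gamma\zeta+L$. Then $\|\xi\|_\infty\le1$ by \eqref{eq:lemXst_e}, and $A_x\xi=PA_{cl}\zeta-\lambda b_x+Pb_{cl}-\dots$, which reduces to the scaled constraint by \eqref{eq:lemXst_c}--\eqref{eq:lemXst_d}. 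Finally $\lambda c_x+\lambda G_x\xi=z$ by \eqref{eq:lemXst_a}--\eqref{eq:lemXst_b}. Combining this with Step 2 gives \eqref{eq:Xst_lambda_uniform_inclusion} for all $\Theta$, and hence contractivity under $u=Kx$ for the true system.

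The main obstacle is Step 2: making sure the enclosure of \cite{modares2026unifying} really covers every $\Theta$ in the CMZ, including the coupling between the $\beta$-constraints and the bilinear terms. Handling the constraint scaling of $\lambda\mathcal X_{\mathrm{st}}$ consistently with \eqref{eq:lemXst_d} is a second delicate point. I would rely on the cited Theorem~2 for the enclosure and only check the sign conventions for the center term $c_h$.
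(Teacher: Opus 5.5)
Your architecture is the one the paper relies on. The paper gives no proof of its own and defers entirely to \cite[Th.~2]{modares2026unifying}: a CZ enclosure $\zono{c_{cl},G_{cl},A_{cl},b_{cl}}$ of the successor set over all $\Theta\in\mathcal M_{\mathrm{cl}}^\Sigma$, followed by the standard CZ containment certificate. Your Steps~1--2 are therefore at the paper's level of rigor. The gap is in Step~3, the one part you verify yourself: there the certificate is never correctly matched to the target $\lambda\mathcal X_{\mathrm{st}}$. Three things go wrong.
\begin{enumerate}
\item Scaling a CZ scales only its center and generators, so $\lambda\mathcal X_{\mathrm{st}}=\zono{\lambda c_x,\lambda G_x,A_x,b_x}$. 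The set with $\lambda b_x$ in the constraint is a different set and can be strictly larger. For example, in $\mathbb R$ take $c_x=0$, $G_x=[1\ \ 0]$, $A_x=[1\ \ 1]$, $b_x=1/2$. Then $\mathcal X_{\mathrm{st}}=[-1/2,1]$, and for $\lambda=1/2$ one gets $\lambda\mathcal X_{\mathrm{st}}=[-1/4,1/2]$ but $\zono{0,\lambda G_x,A_x,\lambda b_x}=[-3/8,1/2]$. So ``absorbing the scaling into the $b$-equation'' is not an equivalence, and a certificate for the larger set does not yield \eqref{eq:Xst_lambda_uniform_inclusion}.
\item With your convention $c_2-c_1=G_2L$, $Pb_1=b_2+A_2L$, the lifted factor must be $\xi=\Gamma\zeta-L$. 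With $\xi=\Gamma\zeta+L$, the point identity is off by $2\lambda G_xL$ and the constraint identity by $2A_xL$.
\item The right-hand side of \eqref{eq:lemXst_a} contains $c_x$, not $\lambda c_x$. Under the natural reading $c_{cl}=(X_1-C_w)V_Kc_x+c_h$, it states $\lambda G_xL=c_x-c_{cl}$. Your claimed ``matching after a shift'' with $\lambda c_x-c_{cl}$ is exactly the identity that fails, by $(1-\lambda)c_x$.
\end{enumerate}

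Done with the correct sign, $\xi=\Gamma\zeta-L$ gives the following. From \eqref{eq:lemXst_a}--\eqref{eq:lemXst_b}, $c_x+\lambda G_x\xi=z$. From \eqref{eq:lemXst_c}--\eqref{eq:lemXst_d}, $A_x\xi=Pb_{cl}-A_xL=\lambda b_x$. From \eqref{eq:lemXst_e}, $\|\xi\|_\infty\le 1$. Hence the conditions as printed certify
\[
\Theta\,\mathcal X_{\mathrm{st}}\oplus\mathcal Z_w\subseteq\zono{c_x,\lambda G_x,A_x,\lambda b_x},\qquad\forall\,\Theta\in\mathcal M_{\mathrm{cl}}^\Sigma .
\]
This set coincides with $\lambda\mathcal X_{\mathrm{st}}$ when $c_x=0$ and $b_x=0$, as for the origin-centered zonotope in the numerical example, but not in general. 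To close the argument you have two options:
\begin{itemize}
\item restrict to that case; or
\item certify containment in the correct target, via $\lambda G_xL=\lambda c_x-c_{cl}$ and $Pb_{cl}=b_x+A_xL$, and show from the explicit definitions of $c_h$ and $(A_{cl},b_{cl})$ in \cite[Th.~2]{modares2026unifying} that \eqref{eq:lemXst_a} and \eqref{eq:lemXst_d} reduce to these.
\end{itemize}
Deferring the ``center bookkeeping'' to the citation while asserting the two equivalences above does not establish the stated inclusion.
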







We select a local set $\mathcal X_{\mathrm{st}}$ to ensure a well-posed synthesis of $K$ and well-conditioned closed-loop data. This admits a CZ representation of the induced uncertainty, in contrast to the ellipsoidal description in~\cite{farjadnia2024robust}. Since imposing $\mathcal X_{\mathrm{st}}$ as the terminal constraint may still be conservative, we construct a larger terminal invariant set under the same fixed gain $K$ using MRPI-based approximations and the data-driven uncertainty characterization developed below, while preserving the local stability certificate induced by $(K,\mathcal X_{\mathrm{st}})$.

\subsection{Computation of an Over-Approximation of the MRPI Set}
\label{sec:mrpi_basic}

Consider the linear system
\begin{equation}
x_{(k+1)} = \Theta x_{(k)} + w_{(k)}, \qquad w_{(k)} \in \mathcal{Z}_w,
\label{eq:robust_dyn}
\end{equation}
subject to the state constraint set $\mathcal{X}\subset\mathbb{R}^{n_x}$, where $\Theta=A_{\mathrm{tr}}+B_{\mathrm{tr}}K$ with $K$ calculated in Section~\ref{sec:local_design_set}.
The MRPI set can be computed as the limit of the following set recursion \cite{rakovic2008invariant}:
\begin{equation}
\Omega_{(0)} = \mathcal{X}, \qquad
\Omega_{(k+1)} = \Theta^{-1}\!\left(\Omega_{(k)} \ominus \mathcal{Z}_w\right) \cap \mathcal{X},
\label{eq:mrpi_recursion}
\end{equation}
where $\ominus$ denotes the Minkowski difference. The recursion is terminated when $\Omega_{(k+1)} = \Omega_{(k)}$,
in which case $\Omega_{(k)}$ is the MRPI set associated with \eqref{eq:robust_dyn}.
Then, $\mathrm{MRPI}(\Theta^{-1})$ denote the MRPI set obtained by evaluating the backward recursion~\eqref{eq:mrpi_recursion} for $\Theta^{-1}$.

When $\Theta$ is known, \eqref{eq:mrpi_recursion} is implemented by repeated application of the pre-image operator induced by $\Theta^{-1}$.
In the data-driven setting, $\Theta$ is unknown and only constrained by the collected data.
The resulting uncertainty is represented by the MZ $\mathcal{M}_{\mathrm{cl}}^\Sigma$ in~\eqref{clset}, which over-approximates the set of data-consistent closed-loop matrices.
Hence, the backward-reachability recursion must be evaluated over all $\Theta\in\mathcal{M}_{\mathrm{cl}}^\Sigma$, requiring a tractable description of the corresponding inverse maps.

\begin{assumption}\label{ass:invertibility}
Every $\Theta\in\mathcal{M}_{\mathrm{cl}}^\Sigma$ is nonsingular.
\hfill $\lrcorner$
\end{assumption}

\begin{remark}[Justification of Assumption~\ref{ass:invertibility}]\label{rem:invertibility_justification}
Assumption~\ref{ass:invertibility} is implied by the synthesis conditions in Lemma~\ref{lem:Xst_synthesis_contractivity} whenever $\mathcal{X}_{\mathrm{st}}$ has nonempty interior. Specifically, the uniform $\lambda$-contractivity condition~\eqref{eq:Xst_lambda_uniform_inclusion} requires $\Theta\,\mathcal{X}_{\mathrm{st}} \oplus \mathcal{Z}_w \subseteq \lambda\,\mathcal{X}_{\mathrm{st}}$ for all $\Theta \in \mathcal{M}_{\mathrm{cl}}^\Sigma$. Since $0 \in \mathcal{Z}_w$ by Assumption~\ref{ass:zon-noise}, this implies $\Theta\,\mathcal{X}_{\mathrm{st}} \subseteq \lambda\,\mathcal{X}_{\mathrm{st}}$, and hence $\|\Theta\|_{\mathcal{X}_{\mathrm{st}}} \le \lambda < 1$, where $\|\cdot\|_{\mathcal{X}_{\mathrm{st}}}$ denotes the gauge (Minkowski) norm induced by $\mathcal{X}_{\mathrm{st}}$~\cite{blanchini2008set}. It follows that $\rho(\Theta) \le \|\Theta\|_{\mathcal{X}_{\mathrm{st}}} < 1$ for every $\Theta \in \mathcal{M}_{\mathrm{cl}}^\Sigma$, and in particular every such $\Theta$ is nonsingular.
\end{remark}

A common relaxation first encloses $\mathcal{M}_{\mathrm{cl}}^\Sigma$ by an interval matrix and then inverts this enclosure to obtain a set containing the true inverse $\Theta^{-1}$~\cite{alanwar2022data}.
This inverse set is, however, often highly conservative, because the interval enclosure destroys the dependence among the matrix entries induced by the zonotopic generators.
To reduce this conservatism, we construct a tighter inverse uncertainty set $\mathcal{M}_{\mathrm{cl,inv}}^\Sigma$ that preserves the generator structure of $\mathcal{M}_{\mathrm{cl}}^\Sigma$.

We generate closed-loop data by applying the fixed state-feedback law $u_{(k)} = Kx_{(k)}$ from initial conditions $x_{(0)}\in\mathcal X_{\mathrm{st}}$, and record $T$ consecutive state--input samples. Define the stacked data matrices
\begin{align}
X_{\mathrm{cl}}^- &= \begin{bmatrix} x_{(0)} & x_{(1)} & \cdots & x_{(T-1)}\end{bmatrix}, \\
X_{\mathrm{cl}}^+ &= \begin{bmatrix} x_{(1)} & x_{(2)} & \cdots & x_{(T)}\end{bmatrix}, \label{eq:Xminus_Xplus}\\
U_{\mathrm{cl}}^- &= \begin{bmatrix} u_{(0)} & u_{(1)} & \cdots & u_{(T-1)}\end{bmatrix}. \label{eq:Uminus}
\end{align}
By construction, the samples satisfy $x_{(k)}\in\mathcal X_{\mathrm{st}}$ and $u_{(k)}\in\mathcal U$ for all $k=0,\ldots,T-1$.


\begin{lemma}\label{lem:theta_inv_mz}
Assume $\mathrm{rank}(X_{\mathrm{cl}}^+)=n_x$. Let $\bar{\mathcal M}_{\mathrm{cl,inv}}^\Sigma$ be an outer approximation of $\{\Theta^{-1}:\Theta\in\mathcal M_{\mathrm{cl}}^\Sigma\}$ and set $\tilde{\mathcal Z}_v = \bar{\mathcal M}_{\mathrm{cl,inv}}^\Sigma\,\mathcal Z_w$. 
Let $\mathcal M_v$ be obtained by stacking $T$ copies of $\tilde{\mathcal Z}_v$ column-wise as in~\cite{alanwar2023data}. 
Then
\begin{align}
\{\Theta^{-1}:\Theta\in\mathcal M_{\mathrm{cl}}^\Sigma\}
\subseteq \mathcal{M}_{\mathrm{cl,inv}}^\Sigma
= \left( X_{\mathrm{cl}}^- -\mathcal{M}_v\right) {X_{\mathrm{cl}}^+}^\dagger .
\label{eq:theta_inv_mz}
\end{align}
\end{lemma}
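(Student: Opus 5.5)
The plan is to reverse the closed-loop data relation. This turns the backward dynamics into a linear regression in the unknown $\Theta^{-1}$, with a noise term that is itself set-bounded through the outer approximation $\bar{\mathcal M}_{\mathrm{cl,inv}}^\Sigma$.

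First, write the recorded closed-loop data as $x_{(k+1)}=\Theta_\tr x_{(k)}+w_{(k+1)}$ with $\Theta_\tr=A_\tr+B_\tr K$. In matrix form this reads $X_{\mathrm{cl}}^+=\Theta_\tr X_{\mathrm{cl}}^-+W_{\mathrm{cl}}$, where each column of $W_{\mathrm{cl}}$ lies in $\mathcal Z_w$ by Assumption~\ref{ass:zon-noise}. By Assumption~\ref{ass:invertibility} $\Theta_\tr$ is nonsingular, which is supported by Remark~\ref{rem:invertibility_justification}. Multiplying by $\Theta_\tr^{-1}$ gives
\begin{align*}
X_{\mathrm{cl}}^- = \Theta_\tr^{-1} X_{\mathrm{cl}}^+ - \Theta_\tr^{-1} W_{\mathrm{cl}} .
\end{align*}
Define $V:=-\Theta_\tr^{-1}W_{\mathrm{cl}}$, whose columns are $v_{(k)}=-\Theta_\tr^{-1}w_{(k+1)}$. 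Since $\Theta_\tr^{-1}\in\bar{\mathcal M}_{\mathrm{cl,inv}}^\Sigma$ and $w_{(k+1)}\in\mathcal Z_w$, we get $\Theta_\tr^{-1}w_{(k+1)}\in\bar{\mathcal M}_{\mathrm{cl,inv}}^\Sigma\mathcal Z_w=\tilde{\mathcal Z}_v$. Because $\mathcal Z_w$ is a zonotope containing the origin, I will check that the sign is harmless. Zonotopes are centrally symmetric about their centers, and if the center is nonzero I will instead keep the sign explicitly in the decomposition. Under the symmetric convention, $-\tilde{\mathcal Z}_v=\tilde{\mathcal Z}_v$, and otherwise the decomposition is written as $X_{\mathrm{cl}}^-=\Theta_\tr^{-1}X_{\mathrm{cl}}^+ + V'$ with $V'$ in the stacked set. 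The column-wise stacking construction of~\cite{alanwar2023data} then gives $V\in\mathcal M_v$, and hence $\Theta_\tr^{-1}X_{\mathrm{cl}}^+\in X_{\mathrm{cl}}^- -\mathcal M_v$.

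Second, solve for $\Theta_\tr^{-1}$. Since $X_{\mathrm{cl}}^+\in\mathbb R^{n_x\times T}$ has full row rank $n_x$, $X_{\mathrm{cl}}^+{X_{\mathrm{cl}}^+}^\dagger=I_{n_x}$. Right-multiplying by ${X_{\mathrm{cl}}^+}^\dagger$ gives $\Theta_\tr^{-1}=(X_{\mathrm{cl}}^- - V)\,{X_{\mathrm{cl}}^+}^\dagger\in(X_{\mathrm{cl}}^- -\mathcal M_v){X_{\mathrm{cl}}^+}^\dagger$.

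Third, extend from $\Theta_\tr$ to every $\Theta\in\mathcal M_{\mathrm{cl}}^\Sigma$. The argument above only uses two facts: $\Theta$ is invertible, and $\Theta$ is consistent with the closed-loop data for some admissible noise realization. The main obstacle is that a generic $\Theta\in\mathcal M_{\mathrm{cl}}^\Sigma$ is consistent with the \emph{open-loop} data $D$, not necessarily with the new closed-loop data. I would handle this in the set-membership sense that the paper uses throughout: the quantity that must be enclosed is the uncertainty about the true inverse. For each $\Theta$ I would define the residual $W_\Theta:=X_{\mathrm{cl}}^+-\Theta X_{\mathrm{cl}}^-$ and argue that the set of all $\Theta$ explaining the data with admissible residual contains $\Theta_\tr$. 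The inclusion~\eqref{eq:theta_inv_mz} is then read as an enclosure of the data-consistent inverses, i.e. of $\{\Theta^{-1}:\Theta\in\mathcal M_{\mathrm{cl}}^\Sigma \text{ consistent with } (X_{\mathrm{cl}}^-,X_{\mathrm{cl}}^+)\}$. I would state explicitly that this is how $\mathcal M_{\mathrm{cl}}^\Sigma$ is understood, namely refined by the closed-loop data. Under that reading, the first two steps apply verbatim with $\Theta$ in place of $\Theta_\tr$, using $\Theta^{-1}\in\bar{\mathcal M}_{\mathrm{cl,inv}}^\Sigma$, and this completes the inclusion. Finally, I would note that $(X_{\mathrm{cl}}^- -\mathcal M_v){X_{\mathrm{cl}}^+}^\dagger$ is a valid matrix (polynomial) zonotope by linearity of the map $M\mapsto M{X_{\mathrm{cl}}^+}^\dagger$, using Lemma~\ref{prop:multi} to represent $\tilde{\mathcal Z}_v$ exactly.
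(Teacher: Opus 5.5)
Your Steps 1--2 follow the paper's proof essentially line by line. You reverse the closed-loop relation to $X_{\mathrm{cl}}^-=\Theta^{-1}X_{\mathrm{cl}}^+ + V_{\mathrm{cl}}^-$, bound each column of $V_{\mathrm{cl}}^-$ through $\bar{\mathcal M}_{\mathrm{cl,inv}}^\Sigma\mathcal Z_w$, stack into $\mathcal M_v$, and right-multiply by ${X_{\mathrm{cl}}^+}^\dagger$ using $X_{\mathrm{cl}}^+{X_{\mathrm{cl}}^+}^\dagger=I_{n_x}$.

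Where you differ is Step~3. The paper starts from an arbitrary nonsingular $\Theta\in\mathcal M_{\mathrm{cl}}^\Sigma$ and writes $x_{(k)}=\Theta^{-1}x_{(k+1)}+v_{(k)}$ with $v_{(k)}=-\Theta^{-1}w_{(k)}$ and $w_{(k)}\in\mathcal Z_w$. It thereby tacitly assumes that every such $\Theta$ reproduces the closed-loop data with admissible noise. You correctly observe that this holds only for $\Theta_{\mathrm{tr}}$, and for those $\Theta$ consistent with $(X_{\mathrm{cl}}^-,X_{\mathrm{cl}}^+)$. It need not hold for a generic element of a set built from the open-loop data $D$.

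Your restricted reading is therefore not a weakness relative to the paper. It is exactly what either argument proves, and the unrestricted inclusion can fail. Take the scalar case with $T=1$, $\mathcal M_{\mathrm{cl}}^\Sigma=[0.2,0.9]$, $\bar{\mathcal M}_{\mathrm{cl,inv}}^\Sigma=[1/0.9,5]$, $\mathcal Z_w=[-0.01,0.01]$, $\Theta_{\mathrm{tr}}=0.5$, and noise-free closed-loop data $x_{(0)}=0.2$, $x_{(1)}=0.1$. Then $\tilde{\mathcal Z}_v=[-0.05,0.05]$ and the right-hand side is $[1.5,2.5]$. This contains $\Theta_{\mathrm{tr}}^{-1}=2$ but not $\{\Theta^{-1}:\Theta\in\mathcal M_{\mathrm{cl}}^\Sigma\}=[1/0.9,5]$. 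Containment of $\Theta_{\mathrm{tr}}^{-1}$ is also what the outer MRPI recursion actually relies on, so nothing is lost downstream.

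On the sign, the paper silently drops the minus in $v_{(k)}=-\Theta^{-1}w_{(k)}$, so your remark is warranted, but it needs tightening.
\begin{itemize}
\item If $\mathcal Z_w$ is centered at the origin, then $-\tilde{\mathcal Z}_v=\tilde{\mathcal Z}_v$ because $-Mz=M(-z)$ with $-z\in\mathcal Z_w$. This does not follow from the central symmetry of zonotopes: $\tilde{\mathcal Z}_v$ is a product set and in general not a zonotope, and symmetry about a nonzero center would not suffice.
\item If $\mathcal Z_w$ merely contains the origin, as Assumption~\ref{ass:zon-noise} allows, your $V'$ rewording does not rescue the formula. One obtains $\Theta^{-1}X_{\mathrm{cl}}^+\in X_{\mathrm{cl}}^-+\mathcal M_v$, so $\mathcal M_v$ would have to be built from copies of $-\tilde{\mathcal Z}_v$ for the stated expression to hold.
\end{itemize}
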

\begin{proof}
Assume that $\Theta\in\mathcal M_{\mathrm{cl}}^\Sigma$ is nonsingular. Then
\[
x_{(k)}=\Theta^{-1}x_{(k+1)}+v_{(k)},\quad v_{(k)}=-\Theta^{-1}w_{(k)},
\]
and stacking over $k=0,\ldots,T-1$ yields
\begin{equation}
X_{\mathrm{cl}}^-=\Theta^{-1}X_{\mathrm{cl}}^+ + V_{\mathrm{cl}}^-,
\quad 
V_{\mathrm{cl}}^-=[\,v_{(0)}~\cdots~v_{(T-1)}\,].
\label{eq:inv_data_relation}
\end{equation}
Since $w_{(k)}\in\mathcal Z_w$ and $\Theta^{-1}\in\bar{\mathcal M}_{\mathrm{cl,inv}}^\Sigma$, it follows that
$v_{(k)}\in \tilde{\mathcal Z}_v=\bar{\mathcal M}_{\mathrm{cl,inv}}^\Sigma\,\mathcal Z_w$ for all $k$, and hence
$V_{\mathrm{cl}}^- \in \mathcal M_v$ by the definition of $\mathcal M_v$.
Rearranging \eqref{eq:inv_data_relation} gives $\Theta^{-1}X_{\mathrm{cl}}^+ \in X_{\mathrm{cl}}^- - \mathcal M_v$.
Right-multiplying by ${X_{\mathrm{cl}}^+}^\dagger$ and using $\mathrm{rank}(X_{\mathrm{cl}}^+)=n_x$ yields
\[
\Theta^{-1}\in \left( X_{\mathrm{cl}}^- -\mathcal M_v\right){X_{\mathrm{cl}}^+}^\dagger,
\]
which proves \eqref{eq:theta_inv_mz}.
\end{proof}


\begin{lemma}\label{lem:theta_inv_cmz}
Assume $\mathrm{rank}(X_{\mathrm{cl}}^+)=n_x$. 
Let $\bar{\mathcal M}_{\mathrm{cl,inv}}^\Sigma$ be an outer approximation of $\{\Theta^{-1}:\Theta\in\mathcal M_{\mathrm{cl}}^\Sigma\}$ and define $\tilde{\mathcal Z}_v = \bar{\mathcal M}_{\mathrm{cl,inv}}^\Sigma\,\mathcal Z_w$. 
Let $\mathcal M_v=\zono{C_{\mathcal M_v}, G_{\mathcal M_v}}$ be obtained by stacking $T$ copies of $\tilde{\mathcal Z}_v$ column-wise as in~\cite{alanwar2023data}.
Set
\begin{align}
A^{(i)}_{\mathcal{N}_v} &= G_{\mathcal{M}_v}^{(i)}(X_{\mathrm{cl}}^+)^\perp, i=1,\dots,\gamma_{\mathcal{M}_v}, \label{eq:ANv_def}\\
\tilde{A}_{\mathcal{N}_v} &= \begin{bmatrix}A^{(1)}_{\mathcal{N}_v} & \dots & A^{(\gamma_{\mathcal{M}_v})}_{\mathcal{N}_v}\end{bmatrix}, \\
B_{\mathcal{N}_v} &= (X_{\mathrm{cl}}^- - C_{\mathcal{M}_v})(X_{\mathrm{cl}}^+)^\perp, \label{eq:BNv_def}\\
\mathcal{N}_v &= \zono{C_{\mathcal{M}_v},\tilde{G}_{\mathcal{M}_v},\tilde{A}_{\mathcal{N}_v},B_{\mathcal{N}_v}}.
\label{eq:Nv_def}
\end{align}
Then the inverse closed-loop matrices satisfy
\begin{equation}
\{\Theta^{-1}:\Theta\in\mathcal M_{\mathrm{cl}}^\Sigma\}
\subseteq 
\mathcal{M}_{\mathrm{cl,inv}}^\Sigma
= (X_{\mathrm{cl}}^- - \mathcal{N}_v)\,(X_{\mathrm{cl}}^+)^\dagger .
\label{eq:theta_inv_cmz_final}
\end{equation}
\end{lemma}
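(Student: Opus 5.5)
The plan follows the proof of Lemma~\ref{lem:theta_inv_mz} and adds one refinement. The unknown noise realization is intersected with the affine constraint that the data impose, in the spirit of the constrained matrix zonotope construction of Lemma~\ref{clrep}. Fix any $\Theta\in\mathcal M_{\mathrm{cl}}^\Sigma$; it is nonsingular by Assumption~\ref{ass:invertibility}. Exactly as before, the data relation \eqref{eq:inv_data_relation} gives $X_{\mathrm{cl}}^-=\Theta^{-1}X_{\mathrm{cl}}^+ + V_{\mathrm{cl}}^-$ with $V_{\mathrm{cl}}^-\in\mathcal M_v$. Hence there is a factor vector $\beta\in[-1,1]^{\gamma_{\mathcal M_v}}$ with
\[
V_{\mathrm{cl}}^-=C_{\mathcal M_v}+\textstyle\sum_i \beta_i G^{(i)}_{\mathcal M_v}.
\]

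The key additional step is to exploit the columns of $(X_{\mathrm{cl}}^+)^\perp$, a basis of $\mathrm{Null}(X_{\mathrm{cl}}^+)$. Right-multiplying the data relation by $(X_{\mathrm{cl}}^+)^\perp$ annihilates the unknown term $\Theta^{-1}X_{\mathrm{cl}}^+$ and leaves
\[
\big(X_{\mathrm{cl}}^- - C_{\mathcal M_v}\big)(X_{\mathrm{cl}}^+)^\perp=\textstyle\sum_i \beta_i\, G^{(i)}_{\mathcal M_v}(X_{\mathrm{cl}}^+)^\perp .
\]
By \eqref{eq:ANv_def}--\eqref{eq:BNv_def}, this reads $\sum_i\beta_i A^{(i)}_{\mathcal N_v}=B_{\mathcal N_v}$. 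So the same factor vector $\beta$ that generates $V_{\mathrm{cl}}^-$ also satisfies the linear constraint defining $\mathcal N_v$ in \eqref{eq:Nv_def}. Here I read $\tilde G_{\mathcal M_v}$ as the generator list $\{G^{(i)}_{\mathcal M_v}\}$ of $\mathcal M_v$, so that $V_{\mathrm{cl}}^-\in\mathcal N_v$ in the sense of Definition~\ref{def:conmatpolyzonotopes} with $E=\CE=I$. Equivalently, $\mathcal N_v=\mathcal M_v\cap\{V: (X_{\mathrm{cl}}^- - V)(X_{\mathrm{cl}}^+)^\perp=0\}$, and the true noise lies in this intersection.

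Next I rearrange to get $\Theta^{-1}X_{\mathrm{cl}}^+=X_{\mathrm{cl}}^- - V_{\mathrm{cl}}^- \in X_{\mathrm{cl}}^- - \mathcal N_v$. Then I right-multiply by $(X_{\mathrm{cl}}^+)^\dagger$. Because $\mathrm{rank}(X_{\mathrm{cl}}^+)=n_x$ and $X_{\mathrm{cl}}^+\in\mathbb R^{n_x\times T}$ has full row rank, we have $X_{\mathrm{cl}}^+(X_{\mathrm{cl}}^+)^\dagger=I_{n_x}$. This yields $\Theta^{-1}=(X_{\mathrm{cl}}^- - V_{\mathrm{cl}}^-)(X_{\mathrm{cl}}^+)^\dagger\in (X_{\mathrm{cl}}^- - \mathcal N_v)(X_{\mathrm{cl}}^+)^\dagger$. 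Right multiplication of a CMZ by a fixed matrix maps each realization to its product, with center and generators multiplied and constraints unchanged, so the image is again a CMZ. Since $\Theta$ was arbitrary, \eqref{eq:theta_inv_cmz_final} follows.

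The main obstacle is not the inclusion itself but bookkeeping. One must verify that the constraint in $\mathcal N_v$ is expressed in the same factors as the generators, which is exactly why the $A^{(i)}_{\mathcal N_v}$ are built from the $G^{(i)}_{\mathcal M_v}$ index by index. One must also note that the whole chain relies on $\Theta^{-1}\in\bar{\mathcal M}_{\mathrm{cl,inv}}^\Sigma$, which guarantees $V_{\mathrm{cl}}^-\in\mathcal M_v$ in the first place, as in Lemma~\ref{lem:theta_inv_mz}. Finally, I would remark that $\mathcal N_v\subseteq\mathcal M_v$, so this set is never larger than the one in Lemma~\ref{lem:theta_inv_mz}.
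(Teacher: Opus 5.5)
Your argument follows the paper's proof. Starting from $X_{\mathrm{cl}}^-=\Theta^{-1}X_{\mathrm{cl}}^++V_{\mathrm{cl}}^-$, you turn consistency on $\mathrm{Null}(X_{\mathrm{cl}}^+)$ into the linear factor constraints defining $\mathcal N_v$, and you finish with $X_{\mathrm{cl}}^+(X_{\mathrm{cl}}^+)^\dagger=I_{n_x}$. The paper phrases the middle step through the Fredholm alternative and states its conclusion in the sufficiency direction (``for any $V_{\mathrm{cl}}^-\in\mathcal N_v$ the equation admits a solution''). Your direct right-multiplication by $(X_{\mathrm{cl}}^+)^\perp$ is exactly the necessity direction the inclusion needs, so that step is cleaner in your version.

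Both versions, however, rely on a step that fails as written: ``fix any $\Theta\in\mathcal M_{\mathrm{cl}}^\Sigma$ \dots\ with $V_{\mathrm{cl}}^-\in\mathcal M_v$.'' The closed-loop samples are generated by the true matrix $\Theta_{\mathrm{tr}}=A_{\mathrm{tr}}+B_{\mathrm{tr}}K$, i.e.\ $X_{\mathrm{cl}}^+=\Theta_{\mathrm{tr}}X_{\mathrm{cl}}^-+W_{\mathrm{cl}}$ with the columns of $W_{\mathrm{cl}}$ in $\mathcal Z_w$. By contrast, $\mathcal M_{\mathrm{cl}}^\Sigma$ was built from the open-loop data $D$. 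For any other $\Theta$ in that set,
\[
X_{\mathrm{cl}}^- - \Theta^{-1}X_{\mathrm{cl}}^+ = -\Theta_{\mathrm{tr}}^{-1}W_{\mathrm{cl}} + \bigl(\Theta_{\mathrm{tr}}^{-1}-\Theta^{-1}\bigr)X_{\mathrm{cl}}^+ ,
\]
and the model-mismatch term is not bounded by $\mathcal M_v$.

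For instance, take a scalar case with $\mathcal M_{\mathrm{cl}}^\Sigma=[0.4,0.6]$, $\Theta_{\mathrm{tr}}=0.5$, $\mathcal Z_w=[-0.1,0.1]$ and $\bar{\mathcal M}_{\mathrm{cl,inv}}^\Sigma=[1/0.6,1/0.4]$. Then $\tilde{\mathcal Z}_v=[-0.25,0.25]$. A single closed-loop pair $x_{(0)}=10$, $x_{(1)}=5$ gives $\mathrm{Null}(X_{\mathrm{cl}}^+)=\{0\}$, so $\mathcal N_v=\mathcal M_v$, and yields $\mathcal M_{\mathrm{cl,inv}}^\Sigma=[1.95,2.05]$. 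This set contains $\Theta_{\mathrm{tr}}^{-1}=2$ but neither $1/0.4$ nor $1/0.6$. So the inclusion for all of $\mathcal M_{\mathrm{cl}}^\Sigma$ cannot be proved this way; it is false in general.

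What your argument does establish is $\Theta_{\mathrm{tr}}^{-1}\in\mathcal M_{\mathrm{cl,inv}}^\Sigma$, because $\Theta_{\mathrm{tr}}\in\mathcal M_{\mathrm{cl}}^\Sigma$ gives $\Theta_{\mathrm{tr}}^{-1}\in\bar{\mathcal M}_{\mathrm{cl,inv}}^\Sigma$. More generally, it proves the inclusion for every nonsingular $\Theta$ with $\Theta^{-1}\in\bar{\mathcal M}_{\mathrm{cl,inv}}^\Sigma$ whose residuals $X_{\mathrm{cl}}^+-\Theta X_{\mathrm{cl}}^-$ are admissible noise realizations. You should restrict the quantifier to these matrices, which is all the outer MRPI recursion requires.

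A smaller point in the same step: $v_{(k)}=-\Theta^{-1}w_{(k)}$ lies in $-\tilde{\mathcal Z}_v$, which equals $\tilde{\mathcal Z}_v$ only if $\mathcal Z_w$ is centered at the origin.
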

\begin{proof}
From the stacked inverse data relation \eqref{eq:inv_data_relation}, any admissible inverse matrix $F_{\Theta^{-1}}$ must satisfy
\begin{equation}
F_{\Theta^{-1}}X_{\mathrm{cl}}^+ = X_{\mathrm{cl}}^- - V_{\mathrm{cl}}^- .
\label{eq:fred_inv_compact}
\end{equation}
By the Fredholm alternative (cf.~\cite{alanwar2023data}), \eqref{eq:fred_inv_compact} is solvable if and only if $(X_{\mathrm{cl}}^- - V_{\mathrm{cl}}^-)\,z = 0$ for all $z\in \mathrm{Null}(X_{\mathrm{cl}}^+)$, or, equivalently, using a basis $(X_{\mathrm{cl}}^+)^\perp$ of $\mathrm{Null}(X_{\mathrm{cl}}^+)$, $(X_{\mathrm{cl}}^- - V_{\mathrm{cl}}^-)(X_{\mathrm{cl}}^+)^\perp = 0$.
With $V_{\mathrm{cl}}^- \in \mathcal M_v=\zono{C_{\mathcal M_v},\tilde G_{\mathcal M_v}}$, this condition imposes linear equalities on the generator coefficients, namely $(X_{\mathrm{cl}}^- - C_{\mathcal M_v})(X_{\mathrm{cl}}^+)^\perp = \sum_{i=1}^{\gamma_{\mathcal M_v}}\beta^{(i)}\,G_{\mathcal M_v}^{(i)}(X_{\mathrm{cl}}^+)^\perp$, which is precisely encoded by the CMZ $\mathcal N_v$ defined in \eqref{eq:Nv_def}--\eqref{eq:BNv_def}. Therefore, for any $V_{\mathrm{cl}}^- \in \mathcal N_v$, equation \eqref{eq:fred_inv_compact} admits a solution.
Finally, since $\mathrm{rank}(X_{\mathrm{cl}}^+)=n_x$, we obtain $F_{\Theta^{-1}} \in (X_{\mathrm{cl}}^- - \mathcal N_v)(X_{\mathrm{cl}}^+)^\dagger$, which yields \eqref{eq:theta_inv_cmz_final}.
\end{proof}



Lemmas~\ref{lem:theta_inv_mz} and~\ref{lem:theta_inv_cmz} provide data-driven enclosures of the inverse closed-loop map, which we exploit for MRPI computations. 
Define the admissible terminal set $\bar{\mathcal{X}} = \mathcal{X} \cap \{x \in \mathbb{R}^{n_x} \mid Kx \in \mathcal{U}\}$.
Using the preimage operator induced by $\mathcal{M}_{\mathrm{cl,inv}}^\Sigma$, consider the recursion
\begin{equation}
\mathcal{O}_{(0)} = \bar{\mathcal{X}},\quad
\mathcal{O}_{(k+1)}
= \big[\mathcal{M}_{\mathrm{cl,inv}}^\Sigma \otimes \big(\mathcal{O}_{(k)}\ominus \mathcal{Z}_w\big)\big]\cap \bar{\mathcal{X}}.
\label{eq:mrpi_set_recursion}
\end{equation}
The set $\mathcal{O}_{(k)}$ collects states in $\bar{\mathcal{X}}$ that can be driven into $\mathcal{O}_{(k-1)}$ under an admissible inverse realization despite disturbances in $\mathcal{Z}_w$.
The recursion is terminated upon convergence, i.e., $\mathcal{O}_{(k+1)} = \mathcal{O}_{(k)}$, and we set $\mathcal{X}_{\mathrm{out}} = \mathcal{O}_{(k)}$.
In practice, convergence is detected by verifying $\mathcal{O}_{(k+1)} \subseteq \mathcal{O}_{(k)}$ up to a prescribed tolerance.

\begin{proposition}[Sufficient CPZ inclusion conditions~\cite{CPZinclusion}]
Consider the CPZs
$\mathcal{P}_i=\langle c_i,G_i,E_i,A_i,b_i,R_i\rangle_{\mathrm{CPZ}}
\in(\mathbb{R}^d,\mathbb{R}^{d\times n_i},\mathbb{N}_0^{s_i\times n_i},
\mathbb{R}^{p_i\times q_i},\mathbb{R}^{p_i},\mathbb{N}_0^{s_i\times q_i})$
for $i\in\{1,2\}$, and matrices
$\gamma\in\mathbb{R}^{n_2}$, $\Gamma\in\mathbb{R}^{n_2\times n_1}$,
$\Pi\in\mathbb{R}^{p_2\times p_1}$, $\Psi\in\mathbb{R}^{q_2\times q_1}$,
$\psi\in\mathbb{R}^{q_2}$.
Then, the conditions
\begin{subequations}\label{eq:inc_cpz}
\begin{align}
c_1 &= c_2 + G_2\gamma, \label{eq:inc_cpz_a}\\
G_1 &= G_2\Gamma, \label{eq:inc_cpz_b}\\
\Pi A_1 &= A_2\Psi, \label{eq:inc_cpz_c}\\
\Pi b_1 &= b_2 - A_2\psi, \label{eq:inc_cpz_d}\\
\bigl(E_2^\top\bigr)^{\dagger}\,\log\!\bigl(|\gamma| + |\Gamma|\,\mathbf{1}_{n_1}\bigr)
&\le \mathbf{0}_{s_2}, \label{eq:inc_cpz_e}\\
\bigl(R_2^\top\bigr)^{\dagger}\,\log\!\bigl(|\psi| + |\Psi|\,\mathbf{1}_{q_1}\bigr)
&\le \mathbf{0}_{s_2}. \label{eq:inc_cpz_f}
\end{align}
\end{subequations}
are sufficient to guarantee the inclusion $\mathcal{P}_1\subseteq \mathcal{P}_2$.
\end{proposition}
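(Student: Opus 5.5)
The plan is a direct point-by-point argument. Fix an arbitrary $x\in\mathcal P_1$. By Definition~\ref{def:cpz} there is a factor vector $\alpha\in[-1,1]^{s_1}$ with $x=c_1+\sum_{i}\bigl(\prod_k\alpha_k^{(E_1)_{k,i}}\bigr)G_{1(\cdot,i)}$ and $\sum_i\bigl(\prod_k\alpha_k^{(R_1)_{k,i}}\bigr)A_{1(\cdot,i)}=b_1$. Write $m_1(\alpha)\in\mathbb R^{n_1}$ and $r_1(\alpha)\in\mathbb R^{q_1}$ for the vectors of generator monomials and constraint monomials. Then $x=c_1+G_1m_1(\alpha)$ and $A_1r_1(\alpha)=b_1$.

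The goal is a factor vector $\beta\in[-1,1]^{s_2}$ that reproduces the same point in $\mathcal P_2$. Using \eqref{eq:inc_cpz_a}--\eqref{eq:inc_cpz_b}, rewrite $x=c_2+G_2\bigl(\gamma+\Gamma m_1(\alpha)\bigr)$. Likewise, \eqref{eq:inc_cpz_c}--\eqref{eq:inc_cpz_d} give
\[
A_2\bigl(\psi+\Psi r_1(\alpha)\bigr)=A_2\psi+\Pi A_1r_1(\alpha)=A_2\psi+\Pi b_1=b_2 .
\]
So it suffices to find $\beta$ whose generator monomials satisfy $m_2(\beta)=\gamma+\Gamma m_1(\alpha)$ and whose constraint monomials satisfy $r_2(\beta)=\psi+\Psi r_1(\alpha)$.

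Since every monomial of $\alpha\in[-1,1]^{s_1}$ has modulus at most one, the targets are entrywise bounded: $|\gamma+\Gamma m_1(\alpha)|\le|\gamma|+|\Gamma|\mathbf 1_{n_1}$, and similarly for $\psi,\Psi$. The log-conditions \eqref{eq:inc_cpz_e}--\eqref{eq:inc_cpz_f} then enter as follows. Choose $\beta$ entrywise via $\log|\beta|=(E_2^\top)^\dagger\log|\text{target}|$, so that $E_2^\top\log|\beta|$ reproduces the log-moduli of the target monomials. The inequality $\le 0$ then forces $|\beta_k|\le1$, i.e.\ $\beta\in[-1,1]^{s_2}$; the bound is applied to the worst case $|\gamma|+|\Gamma|\mathbf 1$, and the inequality is monotone in the target moduli. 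The same reasoning with $R_2$ handles the constraint factors.

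The main obstacle is this last step. First, the pseudoinverse equation $E_2^\top\log|\beta|=\log|\text{target}|$ is only guaranteed solvable when the target lies in the range of $E_2^\top$. Second, signs must be matched, and the generator and constraint systems must share the same $\beta$. I would handle this by arguing, as in \cite{CPZinclusion}, that the relevant factors are treated monomial-wise: each target entry is realized by an independent factor, or $E_2,R_2$ have a structure making $E_2^\top(E_2^\top)^\dagger$ act as identity on the needed vectors. In that reading, zero targets and signs are absorbed by sign choices and limits. If this fails in general, I would fall back to stating the proposition under the assumption, implicit in \cite{CPZinclusion}, that generator and constraint factors are disjoint and of full-rank exponent structure, so the two systems decouple and each is solved independently.
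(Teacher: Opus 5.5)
Your reduction is correct: \eqref{eq:inc_cpz_a}--\eqref{eq:inc_cpz_d} give $x=c_2+G_2(\gamma+\Gamma m_1(\alpha))$ and $A_2(\psi+\Psi r_1(\alpha))=b_2$. So a single $\beta\in[-1,1]^{s_2}$ realizing both targets as monomial vectors would suffice. The paper gives no proof of its own; it only cites \cite{CPZinclusion}. The gap is exactly the step you flag, and it cannot be closed: \eqref{eq:inc_cpz_e}--\eqref{eq:inc_cpz_f} do not guarantee such a $\beta$, and the proposition as transcribed is false. There are three independent failures.

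(i) Your claim that the test is monotone in the target moduli fails, because $(E_2^\top)^\dagger$ can have negative entries. Take $c_1=c_2=0$, $G_1=G_2=E_1=I_2$, $E_2=\bigl[\begin{smallmatrix}1&1\\0&1\end{smallmatrix}\bigr]$ and no constraints, so \eqref{eq:inc_cpz_c}, \eqref{eq:inc_cpz_d}, \eqref{eq:inc_cpz_f} are vacuous. Then $\mathcal P_1=[-1,1]^2$ and $\mathcal P_2=\{(\beta_1,\beta_1\beta_2)\}=\{(u,v):|v|\le|u|\le 1\}$. With $\gamma=0$, $\Gamma=I_2$, condition \eqref{eq:inc_cpz_e} reads $(E_2^\top)^{-1}\log\mathbf 1_2=\mathbf 0\le\mathbf 0$. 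Yet $(0.1,0.5)\in\mathcal P_1\setminus\mathcal P_2$: indeed $(E_2^\top)^{-1}\log(0.1,0.5)=(\log 0.1,\log 5)$, so this point would need $|\beta_2|=5$.

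(ii) Signs and the range of $E_2^\top$ are not absorbed by sign choices or limits. With $E_2=[2]$, $G_1=G_2=E_1=[1]$, $c_i=0$, the choice $\gamma=0$, $\Gamma=1$ passes \eqref{eq:inc_cpz_e}, but $\mathcal P_2=[0,1]\not\supseteq[-1,1]=\mathcal P_1$. Likewise, a repeated monomial ($E_2=[1\ \ 1]$, $G_2=I_2$) makes $\mathcal P_2$ a diagonal segment, while the box $[-1,1]^2$ with $\gamma=0$, $\Gamma=I_2$ passes the test.

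(iii) Because $(\gamma,\Gamma)$ and $(\psi,\Psi)$ are chosen independently, the generator and constraint targets generally require different factor vectors. This breaks even for constrained zonotopes. Let $\mathcal P_1=\{\alpha_1:\alpha_2=0\}=[-1,1]$, with $c_1=0$, $G_1=[1\ \ 0]$, $A_1=[0\ \ 1]$, $b_1=0$, $E_1=R_1=I_2$. Let $\mathcal P_2=\{\beta_1:\beta_1=0\}=\{0\}$, with $c_2=0$, $G_2=A_2=E_2=R_2=[1]$, $b_2=0$. Then $\gamma=\psi=0$, $\Gamma=[1\ \ 0]$, $\Pi=1/2$, $\Psi=[0\ \ 1/2]$ satisfy all of \eqref{eq:inc_cpz}, although $\mathcal P_1\not\subseteq\mathcal P_2$.

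Your fallback (disjoint generator/constraint factors, full-rank exponent matrices) removes (iii) but not (i)--(ii): those examples have full-rank $E_2$ and no constraints at all. A sound version must do two things: guarantee that every target is realizable as a monomial vector of $\mathcal P_2$, and use one lifting in both equations.

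For instance, let $\mathcal P_2$ be a constrained zonotope ($E_2=R_2=I$). Collect all monomials of $\mathcal P_1$ in a common vector $\mu_1(\alpha)$, and write $G_1m_1(\alpha)=\tilde G_1\mu_1(\alpha)$ and $A_1r_1(\alpha)=\tilde A_1\mu_1(\alpha)$. Require $\tilde G_1=G_2\Gamma$, $\Pi\tilde A_1=A_2\Gamma$, $\Pi b_1=b_2-A_2\gamma$, and $|\gamma|+|\Gamma|\mathbf 1\le\mathbf 1$. In other words, set $\Psi=\Gamma$, $\psi=\gamma$, and \eqref{eq:inc_cpz_e}--\eqref{eq:inc_cpz_f} collapse to a plain norm bound. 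Then $\xi=\gamma+\Gamma\mu_1(\alpha)$ satisfies $x=c_2+G_2\xi$, $A_2\xi=b_2$ and $|\xi|\le\mathbf 1$, which is the standard constrained-zonotope containment argument.

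Deferring the missing step to \cite{CPZinclusion} does not help, since the conditions as stated here are not sufficient.
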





The outer MRPI set $\mathcal{X}_{\mathrm{out}}$ constructed in Section~\ref{sec:mrpi_basic} is induced by an existential preimage operator associated with the data-driven inverse model set $\mathcal{M}_{\mathrm{cl,inv}}^\Sigma$. In particular, the backward recursion \eqref{eq:mrpi_set_recursion} retains a state $x$ if and only if $\exists\, \Theta^{-1} \in \mathcal{M}_{\mathrm{cl,inv}}^\Sigma$ s.t.\ $\Theta^{-1} x \in \mathcal{O}_{(k)} \ominus \mathcal{Z}_w$. This existential condition yields, in general, an over approximation of the true closed-loop MRPI set.

\subsection{Scenario-Based Computation of an Inner Approximation of the MRPI Set}
\label{sec:inner_mrpi}


To enforce terminal invariance, a deterministic universal preimage condition would require $\Theta^{-1} x \in \mathcal{O}_{(k)} \ominus \mathcal{Z}_w$ for all $\Theta^{-1} \in \mathcal{M}_{\mathrm{cl,inv}}^\Sigma$, equivalently leading to the worst-case invariant set $\mathcal{X}_{\mathrm{rob}} = \bigcap_{\Theta^{-1} \in \mathcal{M}_{\mathrm{cl,inv}}^\Sigma} \mathrm{MRPI}(\Theta^{-1})$, which is computationally intractable due to the infinitely many admissible realizations and the coupled generator structure of the MZ/CMZ $\mathcal{M}_{\mathrm{cl,inv}}^\Sigma$.

We therefore adopt a scenario-based approach and seek a probabilistic terminal invariance guarantee. 
Let $\mathbb{P}$ be a sampling distribution supported on $\mathcal{M}_{\mathrm{cl,inv}}^\Sigma$, and draw an i.i.d.\ sample $\theta_N = \{{\Theta^{-1}}_{(i)}\}_{i=1}^N \sim \mathbb{P}^N$.
For each sampled realization ${\Theta^{-1}}_{(i)}$, we compute $\mathcal{X}^{(i)} = \mathrm{MRPI}({\Theta^{-1}}_{(i)})$ and define the scenario terminal set $\mathcal{X}_{\mathrm{sc}} = \bigcap_{i=1}^N \mathcal{X}^{(i)}$.
By construction, $\mathcal{X}_{\mathrm{sc}}$ is invariant for all sampled realizations. 
The following result provides a probabilistic invariance guarantee for unseen realizations drawn according to $\mathbb{P}$.

\begin{definition}\label{def:admissible_xst}
Let $\Theta^{-1}\in\mathcal{M}_{\mathrm{cl,inv}}^\Sigma$ be a sampled inverse realization.
Define $\Adm(\Theta^{-1}) = 1$ if $\mathcal{X}_{\mathrm{st}} \subseteq \mathrm{MRPI}(\Theta^{-1})$ and $\Adm(\Theta^{-1}) = 0$ otherwise, and assume $\mathbb{P}(\Adm(\Theta^{-1})=1)>0$. The conditional measure is $\mathbb{P}_{\Adm}(\cdot)=\mathbb{P}(\cdot \mid \Adm(\Theta^{-1})=1)$.
\end{definition}

\begin{remark}[Conditioning on admissibility]\label{rem:adm_conditioning}
The conditional measure $\mathbb{P}_{\Adm}$ restricts attention to inverse realizations whose induced MRPI set contains the local stabilizing set $\mathcal{X}_{\mathrm{st}}$. This conditioning is mild by design: since $K$ is synthesized to render $\mathcal{X}_{\mathrm{st}}$ uniformly $\lambda$-contractive over the entire data-consistent set $\mathcal{M}_{\mathrm{cl}}^\Sigma$ (Lemma~\ref{lem:Xst_synthesis_contractivity}), every $\Theta \in \mathcal{M}_{\mathrm{cl}}^\Sigma$ satisfies $\rho(\Theta) < 1$ (cf.\ Remark~\ref{rem:invertibility_justification}), and therefore $\mathrm{MRPI}(\Theta^{-1})$ is well-defined and contains $\mathcal{X}_{\mathrm{st}}$ whenever $\mathcal{X}_{\mathrm{st}}$ lies within the constraint set. In both numerical examples, all sampled inverse realizations satisfy $\Adm(\Theta^{-1}) = 1$, yielding an acceptance rate of $100\%$ and rendering $\mathbb{P}_{\Adm} = \mathbb{P}$. Consequently, the probabilistic guarantees in Lemma~\ref{lem:posterior_terminal_inv} and Theorem~\ref{thm:cl_guarantees_prob} hold with respect to the full (unconditional) sampling measure.
\end{remark}

\begin{lemma}\label{lem:posterior_terminal_inv}
Let $\mathcal X_{\mathrm{sc}}$ be the terminal set constructed from a design sample.
Draw an independent i.i.d.\ validation sample $\{\Theta^{-1}_{\mathrm{val},j}\}_{j=1}^M$ according to $\mathbb P_{\Adm}$ and define the violation indicators ($\mathbf 1\{\cdot\}$ denotes the indicator function)
\[
I_j=\mathbf 1\!\left\{\mathcal X_{\mathrm{sc}}\not\subseteq \mathrm{MRPI}(\Theta^{-1}_{\mathrm{val},j})\right\},\qquad
s=\sum_{j=1}^M I_j .
\]
Then, for any $\delta\in(0,1)$, with confidence at least $1-\delta$,
\begin{equation}
\mathbb P_{\Adm}\!\left(\mathcal X_{\mathrm{sc}}\not\subseteq \mathrm{MRPI}(\Theta^{-1})\right)
\le p_U(s,M,\delta),
\label{eq:cp_upper}
\end{equation}
where $p_U(s,M,\delta)$ is the one-sided Clopper--Pearson upper confidence bound, given implicitly by
\begin{equation}
\sum_{k=0}^{s} \binom{M}{k}\, p_U^{\,k}\,(1-p_U)^{M-k}=\delta .
\label{eq:cp_def}
\end{equation}
In particular, if $s=0$, then $p_U(0,M,\delta)=1-\delta^{1/M}$.
\end{lemma}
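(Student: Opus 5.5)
The plan is to reduce the statement to the classical exact binomial confidence bound. The first step is to condition on the design sample. Since $\mathcal X_{\mathrm{sc}}$ is built only from the design sample, and the validation sample is drawn independently of it, we fix $\mathcal X_{\mathrm{sc}}$ and work conditionally on the design data. Define the true violation probability
\[
p^\star=\mathbb P_{\Adm}\bigl(\mathcal X_{\mathrm{sc}}\not\subseteq \mathrm{MRPI}(\Theta^{-1})\bigr),
\]
which is a deterministic number once $\mathcal X_{\mathrm{sc}}$ is fixed. Measurability of the event $\{\Theta^{-1}: \mathcal X_{\mathrm{sc}}\not\subseteq\mathrm{MRPI}(\Theta^{-1})\}$ is assumed, as is implicit in the definition of $\mathbb P_{\Adm}$. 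Under this conditioning, the indicators $I_1,\dots,I_M$ are i.i.d.\ Bernoulli$(p^\star)$, because the $\Theta^{-1}_{\mathrm{val},j}$ are i.i.d.\ according to $\mathbb P_{\Adm}$. Hence $s\sim\mathrm{Bin}(M,p^\star)$.

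The second step is the standard Clopper--Pearson argument. Let $F_p(s)=\sum_{k=0}^{s}\binom{M}{k}p^k(1-p)^{M-k}$. For fixed $s<M$, this function is continuous and strictly decreasing in $p$ on $[0,1]$, and it goes from $1$ to $0$. So \eqref{eq:cp_def} has a unique root $p_U(s,M,\delta)$, and for $s=M$ we set $p_U=1$. Monotonicity then gives the equivalence
\[
p^\star> p_U(s,M,\delta)\iff F_{p^\star}(s)<\delta .
\]
It therefore suffices to show that $\Pr(F_{p^\star}(s)<\delta)\le\delta$, with $s\sim\mathrm{Bin}(M,p^\star)$.

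For the third step, let $s^\dagger$ be the largest integer with $F_{p^\star}(s^\dagger)<\delta$; if none exists, the probability is $0$. Because $F_{p^\star}$ is nondecreasing in $s$, the event $\{F_{p^\star}(s)<\delta\}$ equals $\{s\le s^\dagger\}$. Its probability is $F_{p^\star}(s^\dagger)<\delta$. This is the only nontrivial step: the discrete-CDF monotonicity bookkeeping, including the boundary cases $p^\star\in\{0,1\}$. There I would check directly that the bound holds trivially.

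Finally, I would integrate over the design sample. Since the conditional confidence is at least $1-\delta$ for every design realization, the unconditional confidence is also at least $1-\delta$, which gives \eqref{eq:cp_upper}. For the particular case, set $s=0$. Then \eqref{eq:cp_def} becomes $(1-p_U)^M=\delta$, so $p_U=1-\delta^{1/M}$.
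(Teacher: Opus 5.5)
Your proposal is correct and follows the same route as the paper: you condition on the design sample so that $s\sim\mathrm{Bin}(M,p^\star)$, apply the one-sided Clopper--Pearson bound, and then specialize to $s=0$. The paper simply cites the Clopper--Pearson construction. You go further and spell out the coverage argument (strict monotonicity of $F_p(s)$ in $p$, the discrete-CDF step), the $s=M$ convention $p_U=1$, and the averaging over the design sample, all of which the paper leaves implicit.
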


\begin{proof}
Conditioned on $\mathcal X_{\mathrm{sc}}$, the indicators $I_j$ are i.i.d.\ Bernoulli with mean 
$p=\mathbb P_{\Adm}\!\left(\mathcal X_{\mathrm{sc}}\not\subseteq \mathrm{MRPI}(\Theta^{-1})\right)$. 
Thus $s=\sum_{j=1}^M I_j\sim\mathrm{Binomial}(M,p)$. The bound \eqref{eq:cp_upper} follows from the one-sided Clopper--Pearson construction, i.e., choosing $p_U$ such that $\mathbb P(s\le s_{\mathrm{obs}}\mid p_U)=\delta$, which is equivalent to \eqref{eq:cp_def}. For $s=0$, \eqref{eq:cp_def} reduces to $(1-p_U)^M=\delta$, hence $p_U=1-\delta^{1/M}$.
\end{proof}












\section{DTZPC Formulation and Closed-Loop Guarantees}
\label{sec:online_zpc}

We build on the DTZPC framework in~\cite{farjadnia2024robust}.
We select a nominal model $\bar\Phi=[\bar A\ \bar B]\in\mathcal{M}^\Sigma$ and apply the ancillary feedback law $u_{(k)}=\bar u_{(k)}+K\bigl(x_{(k)}-\bar x_{(k)}\bigr)$, where $K$ is synthesized offline using the local contractive set $\mathcal{X}_{\mathrm{st}}$ in Section~\ref{sec:local_design_set}.
The nominal state evolves according to $\bar x_{(k+1)}=\bar A\,\bar x_{(k)}+\bar B\,\bar u_{(k)}$.
Define the deviation $e_{(k)}=x_{(k)}-\bar x_{(k)}$ and the nominal closed-loop matrix $\bar\Theta=\bar A+\bar B K$.
The deviation dynamics satisfy $e_{(k+1)}=\bar\Theta\,e_{(k)}+\varphi_{(k)}$, where $\varphi_{(k)}$ aggregates the effect of model mismatch $\Delta\Phi=\Phi-\bar\Phi$ and the additive disturbance.
Following~\cite{farjadnia2024robust,alanwar2023data}, we construct a zonotopic bound $\mathcal{Z}_{\varphi}$ from the available data and the known disturbance set $\mathcal{Z}_w$ such that $\varphi_{(k)}\in\mathcal{Z}_{\varphi}$ for all admissible $(x,u)$, and assume $\mathbf{0}\in\mathcal{Z}_{\varphi}$.
We consider a robust positively invariant set $\mathcal{S}$ satisfying
\begin{equation}
\bar\Theta\,\mathcal{S}\oplus\mathcal{Z}_{\varphi}\subseteq\mathcal{S}.
\label{eq:S_RPI}
\end{equation}
As a consequence, the true state satisfies $x_{(k)}\in \bar x_{(k)}\oplus\mathcal{S}$.
Robust constraint satisfaction is enforced through the standard tube tightening
$\bar x_{(t+k|t)}\in\mathcal{X}\ominus\mathcal{S}$ and
$\bar u_{(t+k|t)}\in\mathcal{U}\ominus K\mathcal{S}$ in the online OCP~\eqref{eq:ocp_phi}; see~\cite{farjadnia2024robust} for the backbone argument. The RPI tube set $\mathcal{S}$ satisfying~\eqref{eq:S_RPI} is computed using the standard finite-sum construction in~\cite{farjadnia2024robust}.




At each time instant $t$, we solve the finite-horizon TZPC problem
\begin{subequations}
\label{eq:ocp_phi}
\begin{align}
\min_{\{\bar u_{(t+k|t)},\bar x_{(t+k|t)}\}}\quad 
&\sum_{k=0}^{N-1}\ell\!\left(\bar x_{(t+k|t)},\bar u_{(t+k|t)}\right)
+\ell_N\!\left(\bar x_{(t+N|t)}\right) \label{eq:ocp_phi_cost}\\
\text{s.t.}\quad
& \bar x_{(t+k+1|t)} = \bar A\,\bar x_{(t+k|t)} + \bar B\,\bar u_{(t+k|t)}, \label{eq:ocp_phi_dyn}\\
& \bar x_{(t+k|t)}\in \mathcal{X}\ominus\mathcal{S}, \label{eq:ocp_phi_x}\\
& x_{(t)} \in \bar x_{(t|t)} \oplus \mathcal{S}, \label{eq:ocp_phi_init}\\
& \bar u_{(t+k|t)}\in \mathcal{U}\ominus K\mathcal{S},\label{eq:ocp_phi_u}\\
& \bar x_{(t+N|t)} \in \mathcal{X}_{\mathrm{sc}}. \label{eq:ocp_phi_terminal}
\end{align}
\end{subequations}
The applied input is $u_{(t)} = \bar u^\star_{(t|t)}+K\bigl(x_{(t)}-\bar x^\star_{(t|t)}\bigr)$, where $(\bar x^\star,\bar u^\star)$ is an optimizer of~\eqref{eq:ocp_phi}.

The terminal design employs three sets associated with the fixed gain $K$: the local controlled $\lambda$-contractive set $\mathcal X_{\mathrm{st}}$ for gain synthesis and local deterministic stability certification, the scenario-based inner approximation $\mathcal X_{\mathrm{sc}}$ used as the terminal constraint in~\eqref{eq:ocp_phi}, and the outer envelope $\mathcal X_{\mathrm{out}}$ used exclusively for online monitoring. Let $\mathcal X_{\mathrm{MRPI}}$ denote the true MRPI set of the closed-loop system under the fixed feedback law $u=Kx$. Define the tightened admissible region $\mathcal X_{\mathrm{tight}} = (\mathcal X\ominus\mathcal S)\cap \{x\in\mathbb R^{n_x}\mid Kx\in \mathcal U\ominus K\mathcal S\}$. The sets $\mathcal X_{\mathrm{sc}}$ and $\mathcal X_{\mathrm{out}}$ are generated by backward-reachability recursions initialized at $\mathcal X_{\mathrm{tight}}$, and satisfy $\mathcal X_{\mathrm{st}}\subseteq \mathcal X_{\mathrm{sc}}\subseteq\mathcal X_{\mathrm{MRPI}}\subseteq\mathcal X_{\mathrm{out}}\subseteq\mathcal X_{\mathrm{tight}}$.
This layered construction induces three distinct levels of guarantees: robust constraint satisfaction inherited from the tube-based DTZPC backbone, probabilistic recursive feasibility certified through the scenario-based terminal layer $\mathcal X_{\mathrm{sc}}$, and local deterministic stability retained through the conservative contractive layer $\mathcal X_{\mathrm{st}}$. In particular, robust constraint satisfaction follows from the tightened constraints in~\eqref{eq:ocp_phi} and the RPI property~\eqref{eq:S_RPI}, while recursive feasibility is established through the terminal condition $\bar x_{(t+N|t)}\in\mathcal X_{\mathrm{sc}}$ together with the scenario-based invariance certificate for $\mathcal X_{\mathrm{sc}}$. The set $\mathcal X_{\mathrm{st}}$ serves only as a local certification layer, whereas $\mathcal X_{\mathrm{out}}$ is used exclusively for monitoring and does not enter the feasibility argument.

\begin{remark}[Enforcement of the terminal constraint]
\label{rem:terminal_enforcement}
Each individual MRPI set $\mathcal{X}^{(i)} = \mathrm{MRPI}(\Theta^{-1}_{(i)})$ in the scenario construction is a constrained polynomial zonotope (CPZ), and the scenario terminal set $\mathcal{X}_{\mathrm{sc}} = \bigcap_{i=1}^N \mathcal{X}^{(i)}$ is itself a CPZ obtained via Lemma~\ref{prop:intersection}, which is in general nonconvex. The terminal constraint $\bar{x}_{(t+N|t)} \in \mathcal{X}_{\mathrm{sc}}$ in~\eqref{eq:ocp_phi_terminal} is enforced by introducing the CPZ factor variables $\alpha \in [-1,1]^p$ and imposing the polynomial equality constraints from~\eqref{eq:con-poly-zono}. The resulting online problem is a polynomial program.
\end{remark}







\begin{theorem}
\label{thm:cl_guarantees_prob}
Assume that the tube set $\mathcal S$ satisfies~\eqref{eq:S_RPI} and that $\mathcal X_{\mathrm{sc}}\subseteq {\mathcal{X}}_{\mathrm{tight}}$.
Let $\mathcal X_{\mathrm{sc}}$ be constructed by the scenario procedure of Section~\ref{sec:inner_mrpi} under the conditional sampling measure $\mathbb P_{\Adm}$.
Assume that~\eqref{eq:ocp_phi} is feasible at $t=0$ and the applied input is
\[
u_{(t)}=\bar u^\star_{(t|t)}+K\bigl(x_{(t)}-\bar x^\star_{(t|t)}\bigr).
\]
Then the following hold:
\begin{enumerate}[label=(\roman*)]
\item \label{it:cons_prob}
For all $t\in\mathbb Z_{\ge 0}$, the closed-loop satisfies $x_{(t)}\in\mathcal X$ and $u_{(t)}\in\mathcal U$.
\item \label{it:rf_prob}
Draw an independent i.i.d.\ validation sample of size $M$ according to $\mathbb P_{\Adm}$ and compute $p_U(s,M,\delta)$ as in Lemma~\ref{lem:posterior_terminal_inv}. Then, with confidence at least $1-\delta$ with respect to the validation sample,
\[
\mathbb P_{\Adm}\!\left(\Theta^{-1}:\ \mathcal X_{\mathrm{sc}}\not\subseteq \mathrm{MRPI}(\Theta^{-1})\right)\le p_U(s,M,\delta).
\]
Consequently, under the same event, the receding-horizon problem~\eqref{eq:ocp_phi} is recursively feasible along the closed loop for all but a $p_U(s,M,\delta)$-fraction of inverse realizations drawn according to $\mathbb P_{\Adm}$.
\item \label{it:stab_prob}
Under the event in~\ref{it:rf_prob}, the resulting closed loop is practically stable with respect to the tube set $\mathcal S$.
\end{enumerate}
\end{theorem}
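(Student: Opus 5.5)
The plan is to prove the three items in order, reusing the tube-based DTZPC backbone of~\cite{farjadnia2024robust} for the parts that do not depend on the terminal layer, and Lemma~\ref{lem:posterior_terminal_inv} for the probabilistic part.

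\emph{Item (i).} Argue by induction on $t$. At time $t$ the optimizer gives $x_{(t)}\in\bar x^\star_{(t|t)}\oplus\mathcal S$ by~\eqref{eq:ocp_phi_init}. Hence $e_{(t)}=x_{(t)}-\bar x^\star_{(t|t)}\in\mathcal S$. Then~\eqref{eq:ocp_phi_x} gives $x_{(t)}\in(\mathcal X\ominus\mathcal S)\oplus\mathcal S\subseteq\mathcal X$. Likewise~\eqref{eq:ocp_phi_u} gives $u_{(t)}=\bar u^\star_{(t|t)}+Ke_{(t)}\in(\mathcal U\ominus K\mathcal S)\oplus K\mathcal S\subseteq\mathcal U$. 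These inclusions need only feasibility at time $t$. Feasibility at all times is supplied by item (ii) on the good event. Independently of the terminal set, the error dynamics $e_{(t+1)}=\bar\Theta e_{(t)}+\varphi_{(t)}$ with $\varphi_{(t)}\in\mathcal Z_\varphi$ and the RPI property~\eqref{eq:S_RPI} give $x_{(t+1)}\in\bar x^\star_{(t+1|t)}\oplus\mathcal S$. This makes the shifted nominal state an admissible initialization for~\eqref{eq:ocp_phi_init} at $t+1$.

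\emph{Item (ii).} The probability bound is exactly Lemma~\ref{lem:posterior_terminal_inv}. The indicators are Bernoulli conditioned on the design sample, since the validation sample is drawn independently under $\mathbb P_{\Adm}$. For recursive feasibility I will use the standard shifted candidate: $\bar u_{(t+k|t+1)}=\bar u^\star_{(t+k|t)}$ for $k=1,\dots,N-1$, and the terminal input $\bar u_{(t+N|t+1)}=K\bar x^\star_{(t+N|t)}$. Fix an inverse realization with $\mathcal X_{\mathrm{sc}}\subseteq\mathrm{MRPI}(\Theta^{-1})$, which is the complement of the violation event. Since $\mathrm{MRPI}(\Theta^{-1})$ is invariant for the corresponding closed loop and lies in $\mathcal X_{\mathrm{tight}}$, the terminal successor $\Theta\bar x^\star_{(t+N|t)}$ satisfies the state and input tightenings. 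This follows from $\mathcal X_{\mathrm{sc}}\subseteq\mathcal X_{\mathrm{tight}}$ and the definition of $\mathcal X_{\mathrm{tight}}$.

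\emph{Main obstacle.} The successor must also lie back in $\mathcal X_{\mathrm{sc}}$, not merely in $\mathrm{MRPI}(\Theta^{-1})$. I would first try to close the gap by interpreting the terminal constraint at $t+1$ relative to the realization-wise MRPI set. Failing that, I would note that the statement claims feasibility only ``for all but a $p_U$-fraction of realizations'' in exactly the sense of the event in Lemma~\ref{lem:posterior_terminal_inv}. I would then identify recursive feasibility with the containment $\mathcal X_{\mathrm{sc}}\subseteq\mathrm{MRPI}(\Theta^{-1})$, using that $\mathcal X_{\mathrm{sc}}$ is an intersection of invariant sets whose invariance carries over to $\Theta$ on that event. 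A second mismatch is that the nominal model uses $\bar\Theta$ while the MRPI sets use the true $\Theta$. I would absorb this discrepancy into $\mathcal Z_\varphi$, as in~\cite{farjadnia2024robust}.

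\emph{Item (iii).} On the event of (ii), the closed loop is recursively feasible. Using the shifted candidate and standard stage-cost arguments, the optimal value $V_N$ decreases along the nominal trajectory, with the terminal cost $\ell_N$ a local Lyapunov function on $\mathcal X_{\mathrm{st}}$. The required decrease comes from the $\lambda$-contractivity in Lemma~\ref{lem:Xst_synthesis_contractivity}, e.g.\ taking $\ell_N$ as the gauge of $\mathcal X_{\mathrm{st}}$, which contracts by $\lambda$ uniformly over $\mathcal M_{\mathrm{cl}}^\Sigma$. Hence $\bar x^\star_{(t|t)}\to0$. Since $x_{(t)}\in\bar x^\star_{(t|t)}\oplus\mathcal S$, the true state converges to a neighborhood of $\mathcal S$, which is practical stability with respect to $\mathcal S$.
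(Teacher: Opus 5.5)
Your decomposition matches the paper's proof: the tube argument for (i), Lemma~\ref{lem:posterior_terminal_inv} plus a shifted-candidate argument for (ii), and the standard tube-MPC argument for (iii). The step you flag as the main obstacle is a genuine gap, however, and neither of your proposed closures works. Reinterpreting the terminal constraint at $t+1$ relative to $\mathrm{MRPI}(\Theta^{-1})$ changes the optimization problem, and that set is unknown online, so this says nothing about \eqref{eq:ocp_phi}. Your fallback needs $\mathcal X_{\mathrm{sc}}$ to be invariant for $\Theta$ on the good event, and this fails for two reasons. First, $\mathcal X_{\mathrm{sc}}\subseteq\mathrm{MRPI}(\Theta^{-1})$ only gives $\Theta\mathcal X_{\mathrm{sc}}\oplus\mathcal Z_w\subseteq\mathrm{MRPI}(\Theta^{-1})$, not $\subseteq\mathcal X_{\mathrm{sc}}$. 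Second, an intersection of sets that are invariant under \emph{different} maps is in general invariant under none of them. For instance, take $\mathcal X=[-1,1]^2$, $\mathcal Z_w=\{0\}$, $\Theta_1=\begin{bmatrix}0.5&1\\0&0.5\end{bmatrix}$ and $\Theta_2=\Theta_1^\top$. The point $x=(0,1)^\top$ lies in both MRPI sets, yet $\Theta_1x=(1,0.5)^\top$ is not in the MRPI set of $\Theta_2$, because $\Theta_2\Theta_1x=(0.5,1.25)^\top\notin\mathcal X$. The paper's proof takes exactly this step: it asserts that $\bigcap_i\mathrm{MRPI}(\Theta^{-1}_{(i)})$ is invariant for each sampled realization and then invokes the standard shift argument. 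So it does not supply the missing ingredient either.

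You are also targeting the wrong map. The shifted candidate is a \emph{nominal} trajectory, so the appended successor is $\bar\Theta\bar x^\star_{(t+N|t)}$ with $\bar\Theta=\bar A+\bar BK$, not $\Theta\bar x^\star_{(t+N|t)}$. The tightenings at the appended step follow from $\mathcal X_{\mathrm{sc}}\subseteq\mathcal X_{\mathrm{tight}}$, and the initialization \eqref{eq:ocp_phi_init} at $t+1$ follows from \eqref{eq:S_RPI}, as you correctly note. What remains is the deterministic condition $\bar\Theta\mathcal X_{\mathrm{sc}}\subseteq\mathcal X_{\mathrm{sc}}$. This cannot be ``absorbed into $\mathcal Z_\varphi$'': that set enters only the error recursion $e_{(t+1)}=\bar\Theta e_{(t)}+\varphi_{(t)}$, not the disturbance-free nominal terminal step. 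Nor does the Clopper--Pearson event imply it, since that event concerns unseen realizations $\Theta$. Closing the argument requires a terminal set with $\bar\Theta\mathcal X_{\mathrm{sc}}\subseteq\mathcal X_{\mathrm{sc}}\subseteq\mathcal X_{\mathrm{tight}}$. One example is the fixed point of the joint recursion $\mathcal O_{(k+1)}=\bigcap_i\Theta_{(i)}^{-1}(\mathcal O_{(k)}\ominus\mathcal Z_w)\cap\mathcal X_{\mathrm{tight}}$ with $\bar\Theta$ among the scenarios, which is invariant under all included maps simultaneously, unlike an intersection of individual MRPI sets. Recursive feasibility is then deterministic, and the a posteriori bound only quantifies invariance for unseen realizations. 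Items (i) for all $t$ and (iii) presuppose recursive feasibility and so inherit the gap. You rightly note this for (i); the paper's appeal to \eqref{eq:S_RPI} there yields $e_{(t)}\in\mathcal S$ but not feasibility. In (iii), a decrease of $\ell_N$ only on $\mathcal X_{\mathrm{st}}$ would not suffice, because the terminal state ranges over $\mathcal X_{\mathrm{sc}}$. The gauge of $\mathcal X_{\mathrm{st}}$ does contract under $\bar\Theta$ globally by positive homogeneity (given $\bar\Theta\in\mathcal M_{\mathrm{cl}}^\Sigma$), so a suitably scaled gauge can work, but that is a choice of $\ell_N$ the statement does not make.
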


\begin{proof}
{\ref{it:cons_prob}.}
Let $e_{(t)}=x_{(t)}-\bar x^\star_{(t|t)}$. By \eqref{eq:ocp_phi_init}, $e_{(t)}\in\mathcal S$. Together with \eqref{eq:ocp_phi_x} this yields
$x_{(t)}\in(\mathcal X\ominus\mathcal S)\oplus\mathcal S\subseteq\mathcal X$.
Similarly, \eqref{eq:ocp_phi_u} implies $\bar u^\star_{(t|t)}\in\mathcal U\ominus K\mathcal S$ and hence
$u_{(t)}=\bar u^\star_{(t|t)}+Ke_{(t)}\in(\mathcal U\ominus K\mathcal S)\oplus K\mathcal S\subseteq\mathcal U$.
Since $\mathcal S$ is an RPI tube satisfying \eqref{eq:S_RPI} and is computed via the standard finite-sum construction in~\cite{farjadnia2024robust}, the above inclusions hold for all $t\ge 0$.

{\ref{it:rf_prob}.}
Fix $\Theta^{-1}$. By construction, $\mathcal X_{\mathrm{sc}}=\bigcap_{i=1}^N \mathrm{MRPI}(\Theta^{-1}_{(i)})$ is positively invariant under the terminal feedback for each sampled realization. Hence, for any realization for which $\mathcal X_{\mathrm{sc}}\subseteq \mathrm{MRPI}(\Theta^{-1})$ holds, the standard shift argument applies: feasibility of \eqref{eq:ocp_phi} at time $t$ implies feasibility at time $t+1$ by shifting the optimal sequence and appending the terminal input $\kappa(\cdot)$, and thus recursive feasibility follows by induction.
Lemma~\ref{lem:posterior_terminal_inv} yields the a posteriori bound
\[
\mathbb P_{\Adm}\!\left(\Theta^{-1}:\ \mathcal X_{\mathrm{sc}}\not\subseteq \mathrm{MRPI}(\Theta^{-1})\right)\le p_U(s,M,\delta)
\]
with confidence at least $1-\delta$, which implies the stated probabilistic recursive-feasibility claim.

{\ref{it:stab_prob}.}
Practical stability follows from the standard tube-based MPC argument: the nominal state is driven to and maintained in $\mathcal X_{\mathrm{sc}}$ under the terminal policy on realizations where terminal invariance holds, while $e_{(t)}\in\mathcal S$ for all $t$ by \eqref{eq:S_RPI}.
\end{proof}
\begin{remark}[Conservative terminal-set choice]
If the terminal constraint in~\eqref{eq:ocp_phi_terminal} is replaced by
$\bar x_{(t+N|t)}\in\mathcal X_{\mathrm{st}}$, then the controlled $\lambda$-contractivity of $\mathcal X_{\mathrm{st}}$ under $u=Kx$ recovers the standard local deterministic recursive-feasibility and local robust exponential-stability guarantee, at the price of a smaller terminal feasible region.
\end{remark}



\begin{figure*}[!t]
    \centering
    \begin{subfigure}[h]{0.32\textwidth}
        \includegraphics[width=\linewidth]{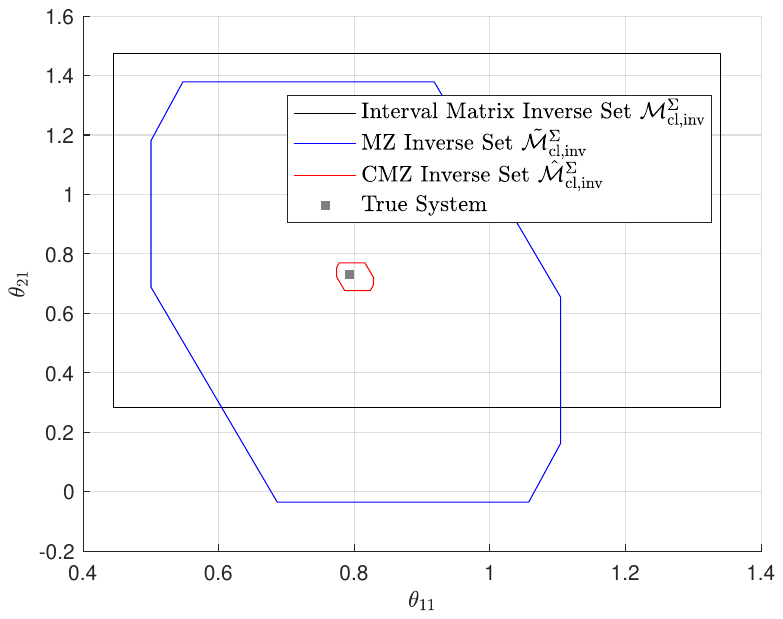}
        \caption{}
        \label{fig:x1x2_a}
    \end{subfigure}
    \begin{subfigure}[h]{0.32\textwidth}
        \includegraphics[width=\linewidth]{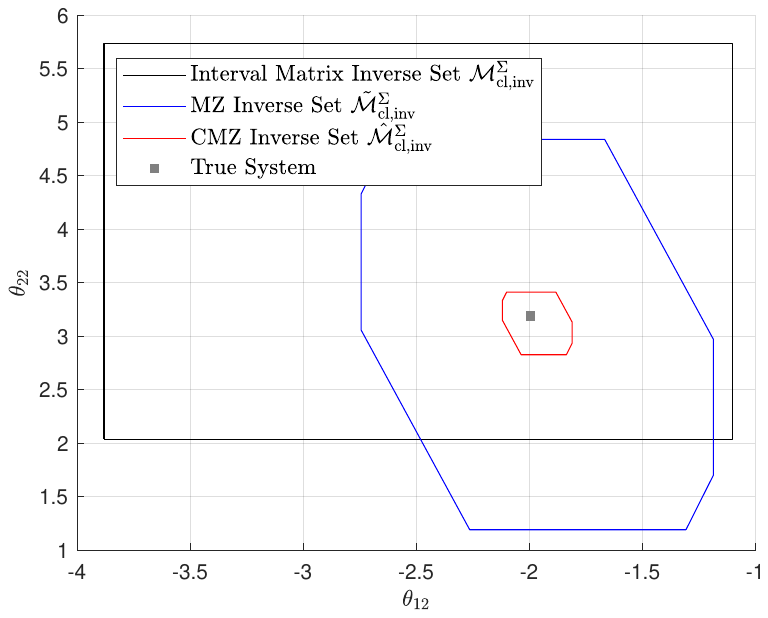}
        \caption{}
        \label{fig:x3x4_a}
    \end{subfigure}
    \begin{subfigure}[h]{0.32\textwidth}
        \includegraphics[width=\linewidth]{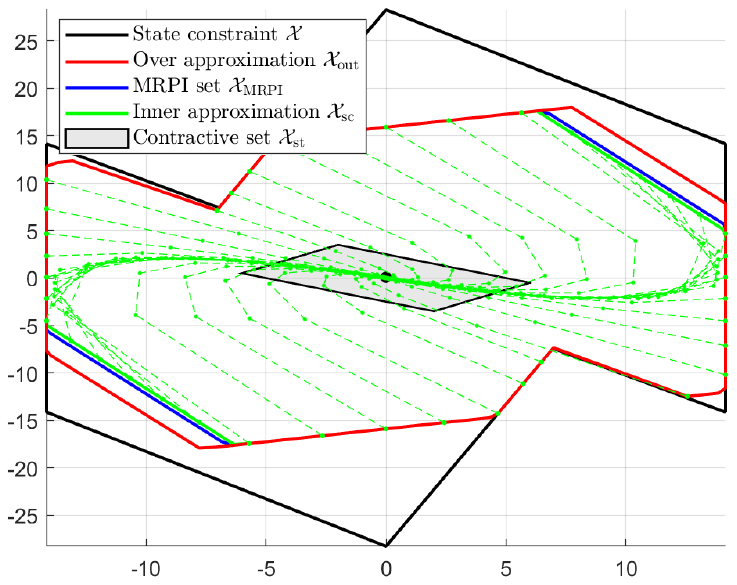}
        \caption{}
        \label{fig:x4x5_a}
    \end{subfigure}
    \caption{Inverse closed-loop model enclosures and layered terminal sets.
(a)--(b) Projections of inverse closed-loop model enclosures on two different coordinate planes.
(c) Phase portrait and projection of the layered terminal sets.}
    \label{fig:projSetA}
\end{figure*}

\begin{figure*}[!t]
    \centering
    \begin{subfigure}[h]{0.32\textwidth}
        \includegraphics[width=\linewidth]{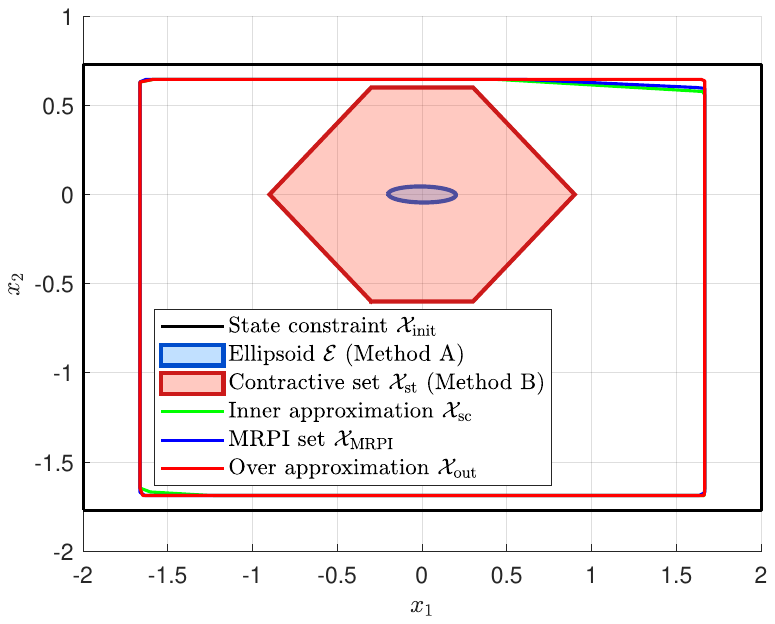}
        \caption{}
        \label{fig:x1x2_a1}
    \end{subfigure}
    \begin{subfigure}[h]{0.32\textwidth}
        \includegraphics[width=\linewidth]{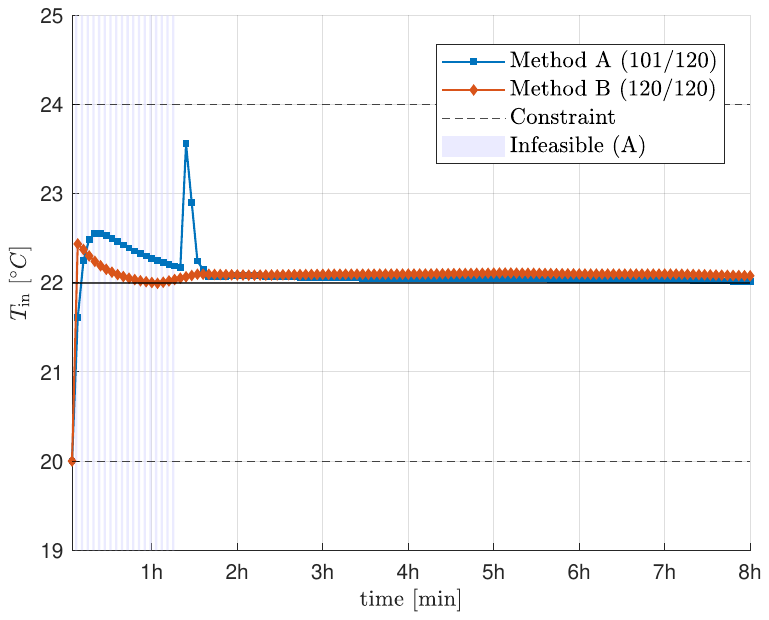}
        \caption{}
        \label{fig:x3x4_a1}
    \end{subfigure}
    \begin{subfigure}[h]{0.32\textwidth}
        \includegraphics[width=\linewidth]{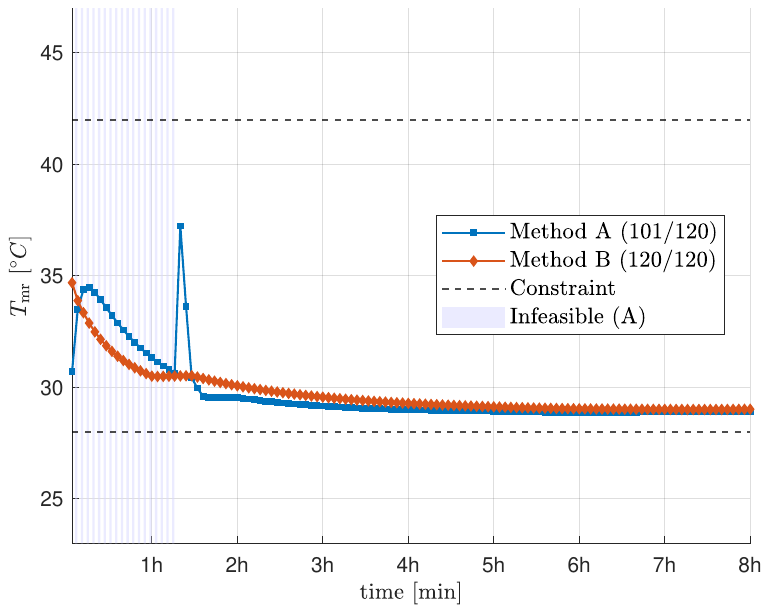}
        \caption{}
        \label{fig:x4x5_a1}
    \end{subfigure}
    \caption{Building thermal control results: (a) terminal-set comparison; (b) closed-loop indoor temperature trajectory; (c) optimal control input trajectory. The sampling time is 4 min.}
    \label{fig:projSetA}
\end{figure*}

\section{Numerical simulations}\label{sec:numerical-simulations}



\subsection{Inverse-Set Tightness and Layered Terminal Sets}

We consider the disturbed LTI system~\eqref{eq:model-linear} with true dynamics $A=\begin{bmatrix}0.8 & 0.5\\ -0.4 & 1.2\end{bmatrix}$, $B=\begin{bmatrix}0\\ 1\end{bmatrix}$, and $\mathcal{Z}_w=\left\langle \begin{bmatrix}0\\0\end{bmatrix}, \begin{bmatrix}0.15 & 0.24\\ 0.03 & 0.18\end{bmatrix}\right\rangle$. A local contractive set $\mathcal{X}_{\mathrm{st}}=\left\langle \begin{bmatrix}0\\0\end{bmatrix}, \begin{bmatrix}-0.04 & 0.02\\ 0.02 & 0.015\end{bmatrix}\right\rangle$ and a stabilizing gain $K$ are synthesized from $T=15$ data samples with contraction factor $\lambda=0.99$. Additional closed-loop data are then collected under $u=Kx$ over a horizon of length $T=10$ to identify a data-consistent inverse closed-loop model set. The input constraint is $\mathcal{Z}_u=\langle 0,15\rangle$, and the initial set is the constrained polynomial zonotope $\mathcal{X}_{\mathrm{init}}$.

We compare three outer enclosures of the inverse closed-loop map: an interval-matrix baseline $\mathcal{M}_{\mathrm{cl,inv}}^{\Sigma}$, a MZ enclosure $\tilde{\mathcal{M}}_{\mathrm{cl,inv}}^{\Sigma}$, and its constrained refinement $\hat{\mathcal{M}}_{\mathrm{cl,inv}}^{\Sigma}$. As shown in Figs.~\ref{fig:x1x2_a} and~\ref{fig:x3x4_a}, the MZ inverse is substantially tighter than the interval baseline in most directions, while the CMZ set $\hat{\mathcal{M}}_{\mathrm{cl,inv}}^{\Sigma}$ provides the tightest overall enclosure by explicitly enforcing the linear data-consistency constraints. The true inverse closed-loop matrix is contained in all three enclosures; see Fig.~\ref{fig:x4x5_a}.

Using $\hat{\mathcal{M}}_{\mathrm{cl,inv}}^{\Sigma}$, we construct the scenario-based terminal set $\mathcal{X}_{\mathrm{sc}}$ and the existential outer envelope $\mathcal{X}_{\mathrm{out}}$. In this experiment, we set $\varepsilon=\delta=0.05$, construct $\mathcal{X}_{\mathrm{sc}}$ from $N=15$ accepted realizations, and certify it a posteriori via Lemma~\ref{lem:posterior_terminal_inv} using an independent validation sample of size $M=59$. Since no violations are observed ($s=0$), we obtain $p_U(0,59,0.05)\le 0.05$, and hence $\mathbb P_{\Adm}\bigl(\mathcal{X}_{\mathrm{sc}}\not\subseteq \mathrm{MRPI}(\Theta^{-1})\bigr)\le 0.05$ with confidence at least $0.95$.

Fig.~\ref{fig:x4x5_a} shows that $\mathcal{X}_{\mathrm{sc}}$ is contained in the true MRPI set, whereas $\mathcal{X}_{\mathrm{out}}$ only mildly over-approximates it. The phase portrait further shows that trajectories initialized from boundary points remain inside $\mathcal{X}_{\mathrm{sc}}$, while trajectories starting on the boundary of $\mathcal{X}_{\mathrm{st}}$ are driven toward the origin under $u=Kx$. These results confirm the role of $\mathcal{X}_{\mathrm{sc}}$ as an enlarged feasible terminal set, $\mathcal{X}_{\mathrm{out}}$ as a tight screening envelope, and $\mathcal{X}_{\mathrm{st}}$ as a conservative local stabilizing set.

\subsection{Building Thermal Control: Case Study and Comparison}

To ensure a fair comparison with the baseline in~\cite{farjadnia2024robust}, we use the same thermal state-space model and single-zone radiator dynamics, \(x(k{+}1)=\begin{bmatrix}0.055 & 0.043 \\ 0.694 & 0.956\end{bmatrix}x(k)+\begin{bmatrix}0.208 \\ 0\end{bmatrix}u(k)+w(k)\) , where \(x=[T_{\mathrm{in}}\;\;T_{\mathrm{w}}]^\top\) collects indoor and wall temperatures, \(u=T_{\mathrm{mr}}\) is the mean radiant radiator temperature, and \(w\) captures the effect of the outside temperature \(T_{\mathrm{out}}\). The data set $\mathcal D$ contains $20$ trajectories of length $5$, the disturbance is bounded by $\mathcal Z_w=\langle 0,2\rangle$, and the setpoint is \((x_s,u_s)=([22\;\;21.37]^\top,28.62)\). Although the system is not controllable, it is stabilizable and the setpoint is reachable.

We compare two methods under the same setup: Method~A, which is the baseline DTZPC, and Method~B, which is the proposed layered-terminal-set approach. The setup is given by $\mathcal U=\langle 7,35\rangle$, $
\mathcal X=\left\langle [\,22,\;20.85\,]^\top,\;\operatorname{diag}(2,\,1.25)\right\rangle
$, tube set $\mathcal S=10^{-4}\left\langle [-13,\;1]^\top,\;\operatorname{diag}(3345,\,837)\right\rangle$, terminal gain $K=[1.604,\,-5.286]$, terminal cost $P=\begin{bmatrix}0.141&0.039\\0.039&2.762\end{bmatrix}$ with $\alpha=0.0056$, stage cost $\ell(x,u)=\|x-x_s\|^2+10^{-2}|u-u_s|$, and prediction horizon $N=3$ unless stated otherwise. The only distinction is the terminal-set design: Method~A uses a single quadratic terminal set, whereas Method~B employs the layered construction, in particular the scenario terminal set $\mathcal X_{\mathrm{sc}}$. For Method~B, we use $N=15$ accepted realizations and certify the terminal-invariance violation probability a posteriori with $(\varepsilon,\delta)=(0.05,0.05)$ using an independent validation sample of size $M=59$, for which we observe $s=0$ violations.

Table~\ref{tab:feasibility_compare} reports feasibility over $T=120$ trials for varying prediction horizons. Method~B achieves $100\%$ feasibility for all horizons $N=2,\ldots,11$, while Method~A requires a sufficiently long horizon to attain the same rate. The average solve times (AvgT$_A$, AvgT$_B$) increase moderately with $N$ for both methods, indicating that the improved feasibility of Method~B is obtained without a prohibitive computational overhead. The evolution of the indoor temperatures and control input is depicted in Figs.~\ref{fig:x3x4_a1} and~\ref{fig:x4x5_a1}, respectively, corroborating that Method~B preserves the control objective while enlarging the feasible region.









\begin{table}[t]
\centering
\caption{Feasibility comparison over $T=120$ trials for different prediction horizons $N$.}
\label{tab:feasibility_compare}
\setlength{\tabcolsep}{6pt}
\renewcommand{\arraystretch}{1.05}
\begin{tabular}{c|cc|cc}
\hline
$N$ 
& \multicolumn{1}{c}{Method~A feas.} 
& \multicolumn{1}{c|}{Method~B feas.}
& \multicolumn{1}{c}{AvgT$_A$ [s]}
& \multicolumn{1}{c}{AvgT$_B$ [s]} \\
\hline
2  & 96/120 (80.0\%)   & 120/120 (100\%) & 0.3065 & 0.3299 \\
3  & 101/120 (84.2\%)  & 120/120 (100\%) & 0.3648 & 0.3705 \\
4  & 105/120 (87.5\%)  & 120/120 (100\%) & 0.3984 & 0.3768 \\
5  & 108/120 (90.0\%)  & 120/120 (100\%) & 0.4051 & 0.4213 \\
6  & 110/120 (91.7\%)  & 120/120 (100\%) & 0.4567 & 0.4772 \\
7  & 112/120 (93.3\%)  & 120/120 (100\%) & 0.5187 & 0.5804 \\
8  & 114/120 (95.0\%)  & 120/120 (100\%) & 0.6307 & 0.6392 \\
9  & 116/120 (96.7\%)  & 120/120 (100\%) & 0.6967 & 0.7322 \\
10 & 118/120 (98.3\%)  & 120/120 (100\%) & 0.8171 & 0.8147 \\
11 & 120/120 (100\%)   & 120/120 (100\%) & 0.8830 & 0.9890 \\
\hline
\end{tabular}
\end{table}

\section{Conclusion}\label{sec:conclusion}
This paper presented a data-driven tube-based zonotopic predictive control framework with layered terminal sets. The proposed method combines a compact stabilizing constrained-zonotope terminal set with data-driven inner and outer approximations of a nonconvex MRPI set, enabled by a tighter inverse-set construction that avoids the conservatism of interval-based inversion. Compared with existing DTZPC schemes based on ellipsoidal terminal sets, the proposed layered framework is better suited to complex geometric settings while preserving robust constraint satisfaction and recursive feasibility, providing practical stability for the adopted scenario-based terminal-set formulation and retaining local robust exponential stability through a conservative contractive terminal layer. In addition, it provides certified motion-region descriptions that can be exploited for safe interaction with other agents. Future work will consider extensions to nonlinear systems.

\bibliographystyle{IEEEtran}
\bibliography{ref}

\end{document}